\newtheorem{theorem}{Theorem}[section]
\newtheorem{corollary}[theorem]{Corollary}
\newtheorem{lemma}[theorem]{Lemma}
\newtheorem{prop}[theorem]{Proposition}
\newenvironment{customthm}[1]
  {\innercustomthm}
  {\endinnercustomthm}
\theoremstyle{definition}
\newtheorem{example}[theorem]{Example}
\newtheorem{remark}[theorem]{Remark}
\newcommand{\Z}{\mathbb{Z}}
\newcommand{\C}{\mathbb{C}}
\newcommand{\PP}{\mathbb{P}}
\renewcommand{\k}{\Bbbk}
\DeclareMathAlphabet{\pazocal}{OMS}{zplm}{m}{n}
\newcommand{\A}{{\pazocal{A}}}
\newcommand{\OO}{{\pazocal O}}
\renewcommand{\k}{\Bbbk}
\newcommand{\cE}{{\mathcal{E}}}
\newcommand{\cT}{{\mathcal{T}}}
\newcommand{\CC}{{\mathcal{C}}}
\DeclareMathOperator{\Irr}{Irr}
\def\dot{\mathchar"013A}
\newcommand{\hdot}{{\raise1pt\hbox to0.35em{\Huge $\dot$}}}
\begin{document}

\title[Addition-deletion for conic-line arrangements with split Chern polynomial]%
{Addition-deletion for conic-line arrangements with split Chern polynomial}

\author[A. M\u acinic]{Anca~M\u acinic$^*$}
\thanks{$^*$ Partially supported by a grant of the Ministry of Research, Innovation and Digitization, CNCS - UEFISCDI, project number PN-IV-P1-PCE-2023-2001, within PNCDI IV}
\address{Simion Stoilow Institute of Mathematics, 
 Bucharest, Romania}
\email{Anca.Macinic@imar.ro}

\author[J. Vall\`es]{Jean Vall\`es$^{**}$}
\thanks{$^{**}$ Partially supported by ANR
BRIDGES ANR-21-CE40-0017.}
\address{Université de Polynésie française, Polynésie française}
\email{jean.valles@upf.pf}
\date{\today}
\subjclass[2010]{14H50 (Primary); 14B05, 14F06, 14H60, 14C17, 32S22 (Secondary)}

\keywords{plane projective curve; free curve; Chern polynomial; vector bundle}

\begin{abstract} 
We present combinatorial/geometric obstructions induced by the  factorization over the integers of the Chern polynomial of the bundle of logarithmic vector fields associated to a complex projective plane curve. Our results generalize at the same time similar results on projective lines arrangements whose characteristic polynomial factors over the integers and results on free curves. We give a splitting criterion for a rank 2 vector bundle, in terms of restrictions to smooth conics.
\end{abstract}
 
\maketitle

\section{Introduction}

Our objects of study in this note will be projective plane curves, in particular arrangements of 
 lines in the complex projective plane $\PP^2=\PP^2\C$. All curves will be 
 assumed to be reduced.
 We will give an overview of  the general context and the main results, while precise definitions of the notions involved are found in section  \S~\ref{sec:preliminaries}.

Abe shows in  \cite{A2} that the characteristic polynomial of an arrangement of projective lines
induces restrictions on the number of intersection points of the lines in the arrangement, and on the number of intersection points of the arrangement to an arbitrary line in $\PP^2$.  In \cite{FV},  Faenzi and the second author give results of the same flavour, but in terms of the Chern polynomial $c_{\mathcal{T}_{\A}}(t) $ of the bundle of logarithmic vector fields $\cT_{\A}$ associated to the arrangement $\A$ (see section \ref{sec:preliminaries} for a definition).
It is shown in \cite{MS} 
that, for locally free arrangements, in particular  for projective  lines arrangements,  the characteristic  polynomial of the arrangement $\A$  and the Chern polynomial $c_{\mathcal{T}_{\A}}(t) $ determine one another (the relation is to be understood via the Poincar\'e polynomial of the complement of the arrangement, see \cite{OT}).
For this reason, we will recall here condensed versions of the results from \cite{A2} and \cite{FV} phrased  in terms of the Chern polynomial, in Theorems \ref{thm:abe_FV_1},  \ref{thm:abe_FV_2}. 

\begin{theorem}
\label{thm:abe_FV_1}
Let $\A$ be an arrangement of projective lines such that, for some $a,b \in \Z_{>0}, a \leq b$, we have $$c_{\mathcal{T}_{\A}}(t) = (at-1)(bt-1).$$  {\it (i)} For $L \in \A$, denote $\A':= \A \setminus \{L\}$. Then:
\begin{enumerate}
\item  For any $L \in \A$, $|\A' \cap L| \leq a+1$ or $|\A' \cap L| \geq b+1$.
\item If there is a line $L \in \A$ such that $|\A' \cap L| \in \{a+1, b+1\}$, then $\A$ is free.
\item If  there is a line $L \in \A$ such that $|\A' \cap L| > b+1$, then $\A$ is not free.
\end{enumerate}
{\it (ii)} For any line $L \subset \PP^2, \; L \notin \A$, either $|\A \cap L|  \leq a+1$ or $|\A \cap L|  \geq b+1$.
\end{theorem}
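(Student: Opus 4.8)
The plan is to pass to the characteristic polynomial, invoke the restriction and division results for line arrangements, and read the freeness statements off the splitting of the logarithmic bundle. First I would translate the hypothesis. Since every line arrangement is locally free, the dictionary of \cite{MS} (through the Poincar\'e polynomial of the complement, \cite{OT}) applies and turns $c_{\cT_{\A}}(t)=(at-1)(bt-1)$ into the statement that the reduced characteristic polynomial of $\A$ is $(t-a)(t-b)$; equivalently the cone satisfies $\chi(\A,t)=(t-1)(t-a)(t-b)$, so $a+b=|\A|-1$ and $ab$ is the second Chern number of $\cT_{\A}$. Here $a,b$ are the two nontrivial exponents exactly when $\A$ is free, and in general they are the integers cut out by $c_1(\cT_{\A})$ and $c_2(\cT_{\A})$; the hypothesis $a,b\ge 1$ discards the degenerate pencil-type factor $a=0$.

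For the two dichotomies (i)(1) and (ii) I would argue through the restriction. For $L\in\A$ the restricted arrangement $\A^{L}$ is precisely the point set $\A'\cap L$, so $|\A'\cap L|=|\A^{L}|$ and its reduced characteristic polynomial is the linear factor $t-(|\A'\cap L|-1)$. The assertion (i)(1) is then the statement that the single root $|\A'\cap L|-1$ never lies in the open interval $(a,b)$ between the two roots of $\overline{\chi}(\A,t)$, which is the intersection-point inequality of \cite{A2}; the external case (ii) is the parallel statement for the pencils of lines of $\A$ meeting $L$, again from \cite{A2}. The geometric content is that a line concurrent with a large sub-pencil of $\A$ carries few reduced intersection points while a generic line carries many, and the integrality of the factorization of the Chern polynomial is exactly what forbids the intermediate values.

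For the freeness statements I would use division and the splitting of $\cT_{\A}$. If $|\A'\cap L|\in\{a+1,b+1\}$ then the linear polynomial $t-(|\A'\cap L|-1)$ equals $t-a$ or $t-b$ and so divides $\overline{\chi}(\A,t)=(t-a)(t-b)$; since a point arrangement on a line is always free, Abe's Division Theorem \cite{A2} applies and gives that $\A$ is free, which is (i)(2). For (i)(3) I would argue by contraposition: if $\A$ were free then $a,b$ are its exponents and $\cT_{\A}\cong\OO(-a)\oplus\OO(-b)$ splits, so the restriction of a free arrangement forces every $L\in\A$ to satisfy $|\A'\cap L|\le b+1$; hence $|\A'\cap L|>b+1$ rules out freeness.

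The hard part is the inequality of the second paragraph, i.e.\ the precise link between the combinatorial count $|\A'\cap L|$ and the bundle $\cT_{\A}$. This is not a naive equality between the reduced point count and a splitting integer of $\cT_{\A}|_{L}$: already for near-pencils the bundle splits globally, so its splitting type is constant along all lines while $|\A'\cap L|$ takes the two different endpoint values. The correct bridge is the Ziegler multirestriction, whose exponents equal the splitting type of $\cT_{\A}|_{L}$, combined with the semicontinuity of that splitting type; the delicate step is at points of high multiplicity, where the reduced count and the multiplicity-weighted count diverge, and this is exactly the analysis carried out in \cite{A2} and \cite{FV}. Once that correspondence is in hand, the four statements follow formally from the translation of the first paragraph.
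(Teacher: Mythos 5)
Your proposal does not in the end prove the theorem: at the decisive step it reduces the statement to \cite{A2} and \cite{FV}, which is circular here, because the theorem under review is (as the paper itself says) a condensed restatement of exactly those results. You acknowledge this at the crucial moment: for the dichotomies (i)(1) and (ii) --- which you correctly single out as ``the hard part'' --- your argument terminates in the assertion that the needed bridge between $|\A'\cap L|$ and the bundle $\cT_{\A}$ is ``exactly the analysis carried out in \cite{A2} and \cite{FV}''; the Ziegler-multirestriction-plus-semicontinuity route you gesture at is never executed. The freeness claims inherit the same circularity: (i)(2) is delegated to Abe's division-type result, which for line arrangements \emph{is} part (2) of the statement being proved, and (i)(3) rests on the unproved claim that freeness forces $|\A'\cap L|\le b+1$, which is Theorem \ref{thm:abe_FV_2}(i), again a quoted result. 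What survives as genuine argument is the translation of the hypothesis through \cite{MS} into $\overline{\chi}(\A,t)=(t-a)(t-b)$ (correct, and it matches the paper's own framing) and the near-pencil observation that the splitting type of $\cT_{\A}|_{L}$ alone cannot equal the point count (also correct, and a good sanity check), but these do not amount to a proof.

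The paper's own route --- the proof of Theorem \ref{thm:curve_line_comb_chernA}, which specializes to the present statement via Corollaries \ref{cor:quasihom} and \ref{cor:quasihomU} since line arrangement singularities are quasihomogeneous, so $\epsilon=0$ --- needs none of the machinery you invoke: no multirestriction, no semicontinuity, no division theorem. It runs as follows. Since $c_1(\cT_{\A}(a))=a-b\le 0$ and $c_2(\cT_{\A}(a))=0$, Lemma \ref{lemma:c2_split} (from \cite{FV}) gives $\cT_{\A}|_{L}=\OO_L(-b-s)\oplus\OO_L(-a+s)$ with $s\ge 0$ for \emph{every} line $L$. Restricting the deletion sequence \eqref{eq:sequence} to $L$ (resp.\ the dual sequence \eqref{eq:dual_line}, twisted by $\cT^\vee_{\A}(1)=\cT_{\A}(1+a+b)$, when $L\notin\A$) produces a surjection from this split bundle onto $\OO_L(1-|\A'\cap L|)$ (resp.\ onto $\OO_L(|\A\cap L|)$). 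Now a surjection of line bundles on $\PP^1$ from $\OO(e_1)\oplus\OO(e_2)$ onto $\OO(d)$ forces $d\ge\max(e_1,e_2)$ or $d=\min(e_1,e_2)$; applied with $e_1=-b-s$, $e_2=-a+s$ this yields the dichotomies (i)(1) and (ii) uniformly in $s$. Finally, in the endpoint cases the same degree analysis forces $s=0$, so the splitting type is $(a,b)$, and since $c_2(\cT_{\A})=ab$ equals the product of the splitting type, Theorem \ref{thm:pi_L} (Yoshinaga's criterion) gives freeness; when $|\A'\cap L|>b+1$ it forces $s>0$, and the same criterion gives non-freeness. This surjection argument is precisely the bridge missing from your proposal, and it is substantially lighter than the one you propose to import.
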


When the arrangement $\A$ is free with exponents $(a,b)$ we have a factorization of the Chern polynomial  $c_{\mathcal{T}_{\A}}(t) = (at-1)(bt-1)$,
hence the above result applies in particular to free arrangements. Moreover, for free arrangements, a stronger statement holds:

\begin{theorem}
\label{thm:abe_FV_2}
 Let $\A$ be a free projective lines arrangement with exponents $(a,b), \; a \leq b$.
  {\it (i)} Let $L \in \A$ and  $\A' := \A \setminus \{L\}$. Then $|\A' \cap L| \leq a+1$ or $|\A' \cap L| = b+1$.
  
 \noindent  {\it (ii)} For any $L \notin \A$, either $|\A \cap L| = a+1$ or $|\A \cap L| \geq b+1$.
\end{theorem}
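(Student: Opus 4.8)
The plan is to deduce both parts from Theorem \ref{thm:abe_FV_1}, using freeness to sharpen its conclusions; the only genuinely new input will be a lower bound on $|\A\cap L|$ for $L\notin\A$, which I isolate as the crux. Part (i) is immediate. Since $\A$ is free with exponents $(a,b)$ we have $c_{\cT_\A}(t)=(at-1)(bt-1)$, so Theorem \ref{thm:abe_FV_1}(i) applies. Its item (1) gives, for $L\in\A$ and $\A'=\A\setminus\{L\}$, that $|\A'\cap L|\le a+1$ or $|\A'\cap L|\ge b+1$. In the second case, if we had $|\A'\cap L|>b+1$ then item (3) would force $\A$ to be non-free, contradicting the hypothesis; hence $|\A'\cap L|=b+1$. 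This is exactly the dichotomy $|\A'\cap L|\le a+1$ or $|\A'\cap L|=b+1$.

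For part (ii), Theorem \ref{thm:abe_FV_1}(ii) already yields the gap: for $L\notin\A$, either $|\A\cap L|\le a+1$ or $|\A\cap L|\ge b+1$. It therefore remains to upgrade the first alternative to an equality, i.e. to prove the lower bound $|\A\cap L|\ge a+1$ for every $L\notin\A$; combined with the gap this gives $|\A\cap L|=a+1$ or $\ge b+1$. The first thing to record is that freeness means $\cT_\A\cong\OO(-a)\oplus\OO(-b)$ splits globally, so $\cT_\A|_\ell\cong\OO_\ell(-a)\oplus\OO_\ell(-b)$ for \emph{every} line $\ell$; the splitting type is constant. I want to stress, however, that this alone cannot detect $|\A\cap L|$: a generic transversal meets $\A$ in $a+b+1$ points while special ones meet it in fewer, yet $\cT_\A|_L$ is the same bundle on all of them. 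Hence the lower bound must see the actual incidences of $\A$ along $L$, not merely $\cT_\A|_L$.

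The route I would take is to add the line and study the resulting elementary modification. Writing $\A^+=\A\cup\{L\}$, tangency to $\A^+$ is tangency to $\A$ together with tangency to $L$, which realizes $\cT_{\A^+}$ as the kernel of a surjection $\cT_\A\to\F$ onto a sheaf $\F$ supported on $L$ whose length is governed by the $m:=|\A\cap L|$ points of $\A\cap L$ and their multiplicities. Restricting this modification to $L$ and using that $\cT_\A|_L=\OO_L(-a)\oplus\OO_L(-b)$ is balanced at $(a,b)$, one expects the condition that $\cT_{\A^+}|_L$ remain an honest rank-two bundle of the correct degree to translate into the numerical bound $m\ge a+1$; heuristically, $m\le a$ would force the modified restriction to split too negatively, which is incompatible with the globally split $\OO(-a)\oplus\OO(-b)$. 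The main obstacle is precisely to make this implication rigorous: pinning down $\F$ and the exact twist in the restriction sequence, and verifying that the constraint it imposes is $m\ge a+1$ rather than something weaker.

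An alternative, purely enumerative route is to feed the deletion--restriction identity
\[ \chi(\A^+,t)=(t-1)\bigl(t^2-(a+b+1)t+(ab+m-1)\bigr) \]
into Abe's addition machinery. Here one checks quickly that $b_2(\A^+)=b_2(\A)+m$ is an identity, so the coefficients impose no bound by themselves; and since $\A^+$ need not be free and its characteristic polynomial need not split over $\Z$, Theorem \ref{thm:abe_FV_1} cannot simply be re-applied to $\A^+$. Thus this route also reduces the inequality $m\ge a+1$ to the same local analysis of $\A$ along $L$, which I expect to be the heart of the matter.
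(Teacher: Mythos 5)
Your part (i) is correct and complete: freeness gives the factorization $c_{\mathcal{T}_{\A}}(t)=(at-1)(bt-1)$, so Theorem \ref{thm:abe_FV_1}(i) applies, and combining its items (1) and (3) rules out $|\A'\cap L|>b+1$, which is exactly the claimed dichotomy. (For the record, the paper does not prove this statement itself --- it is recalled from \cite{A2} and \cite{FV}, and is the line-arrangement specialization of Theorem \ref{thm:curve_line_comb} --- so the natural benchmark is the exact-sequence machinery of the paper, i.e.\ the proof of Theorem \ref{thm:curve_line_comb_chernA}.)

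Part (ii), however, contains a genuine gap, which you flag yourself: the entire content beyond Theorem \ref{thm:abe_FV_1}(ii) is the bound $|\A\cap L|\ge a+1$, and for it you offer only a heuristic (``one expects'', ``heuristically''), explicitly deferring the rigorous step. That step can be closed with the paper's own sequence \eqref{eq:dual_line2}. Take $\CC=\A\cup L$, $\CC'=\A$, and note that for line arrangements all singularities are ordinary multiple points, hence quasihomogeneous, so $\epsilon(\A\cup L,\A)=0$; since $\deg(\A\cup L)=a+b+2$ (freeness gives $\deg\A=a+b+1$), the sequence yields a surjection of sheaves
\begin{equation*}
\mathcal{T}_{\A}\twoheadrightarrow \mathcal{O}_L(m-a-b-1),\qquad m:=|\A\cap L|.
\end{equation*}
Now restrict to $L$: because $\mathcal{T}_{\A}\cong\mathcal{O}_{\PP^2}(-a)\oplus\mathcal{O}_{\PP^2}(-b)$ splits globally, $\mathcal{T}_{\A}|_L=\mathcal{O}_L(-a)\oplus\mathcal{O}_L(-b)$ with no defect parameter, and tensoring by $\mathcal{O}_L$ preserves surjectivity. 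Then use the elementary fact on $\PP^1$: a map $\mathcal{O}(d_1)\oplus\mathcal{O}(d_2)\to\mathcal{O}(e)$ with $d_1\le d_2$ can be surjective only if $e\ge d_2$ or $e=d_1$ (if $e<d_1$ both components vanish; if $d_1\le e<d_2$ the second component is zero and a nonzero map $\mathcal{O}(d_1)\to\mathcal{O}(e)$ is surjective only when $e=d_1$, since otherwise it is multiplication by a form with zeros). With $(d_1,d_2)=(-b,-a)$ and $e=m-a-b-1$ this gives exactly $m=a+1$ or $m\ge b+1$. This also resolves your stated worry that the constant splitting type ``cannot detect $|\A\cap L|$'': it is not the restriction $\mathcal{T}_{\A}|_L$ alone but the surjection onto the cokernel of the elementary modification --- whose degree is pinned down by $m$ --- that carries the incidence information; freeness enters precisely by forcing the source of that surjection to be $\mathcal{O}_L(-a)\oplus\mathcal{O}_L(-b)$ on \emph{every} line, which upgrades the inequality $m\le a+1$ of Theorem \ref{thm:abe_FV_1}(ii) to the equality $m=a+1$.
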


The above result is  generalized to arbitrary reduced curves in the complex projective plane in \cite[Theorem 2.6]{M}. This generalization, recalled in Theorem \ref{thm:curve_line_comb} below, takes into account the complexity of the singularities of a curve, via the invariant $\epsilon(\cdot, \cdot)$ introduced in \cite{Dimca}.
If $\CC$ is a curve, we will denote by  $\Irr(\CC)$ the set of irreducible components of $\CC$. For the definition of the invariant $\epsilon(\cdot, \cdot)$ see section \S~\ref{sec:preliminaries}.

\begin{theorem} 
\label{thm:curve_line_comb}
Let $\CC$ be a reduced free curve in $\PP^2$ with exponents $(a,b), a \leq b$.

 {\it (i)} If  $L \in \Irr(\CC)$ is a line and $\CC' \coloneqq \bigcup_{C \in \Irr(\CC) \setminus \{L\}}C$, then either 
$|\CC' \cap L| \leq  a+1 - \epsilon(\CC,\CC')$ or  $|\CC' \cap L| = b+1 - \epsilon(\CC,\CC')$.
 
 {\it (ii)} If $L \subset \PP^2$ is a line such that $L \notin \Irr(\CC)$, then either $|\CC \cap L|= a+1  - \epsilon(\CC \cup L, \CC)$ or $|\CC \cap L|  \geq b+1 - \epsilon(\CC \cup L,\CC)$.
\end{theorem}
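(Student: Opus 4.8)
The engine of the proof is a rigidity that comes for free from freeness: if $\CC$ is free with $\cT_{\CC}\cong\OO(-a)\oplus\OO(-b)$, then for \emph{every} line $L\subset\PP^2$ the restriction splits as $\cT_{\CC}|_L\cong\OO_L(-a)\oplus\OO_L(-b)$, with no jumping. The plan is to compare $\cT_{\CC}$ with the logarithmic sheaf of the curve obtained by adjoining $L$ (part (ii)) or deleting $L$ (part (i)) through a short exact sequence whose third term is a line bundle supported on $L$, and then to restrict that sequence to $L\cong\PP^1$. Since every bundle on $\PP^1$ splits and the two exponents of the restricted sheaf are forced to be ordered and to sum to a fixed integer, the restricted sequence leaves exactly two possibilities, and these will be precisely the two alternatives in each statement.

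I would treat part (ii) first. Writing $\ell$ for a linear form defining $L$, a derivation tangent to $\CC$ is tangent to $\CC\cup L$ exactly when it is also tangent to $L$, so $\cT_{\CC\cup L}=\cT_{\CC}\cap\cT_{L}$ and one has an exact sequence
\[
0\longrightarrow \cT_{\CC\cup L}\longrightarrow \cT_{\CC}\longrightarrow \OO_L(m)\longrightarrow 0 ,
\]
with cokernel a line bundle on $L$. The integer $m$ is pinned down by comparing second Chern classes, equivalently total Tjurina numbers: $\tau(\CC\cup L)-\tau(\CC)$ is a sum of local contributions at the points of $\CC\cap L$, equal to $|\CC\cap L|$ in the transverse (nodal) case and, in general, corrected by precisely the singularity invariant $\epsilon(\CC\cup L,\CC)$ of \S\ref{sec:preliminaries}. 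Restricting the sequence to $L$ and using $\mathcal{T}or_1(\OO_L(m),\OO_L)\cong\OO_L(m-1)$ gives a four-term sequence; splitting off the surjection $\cT_{\CC}|_L\twoheadrightarrow\OO_L(m)$, whose kernel is $\OO_L(-(a+b+m))$, exhibits $\cT_{\CC\cup L}|_L$ as an extension
\[
0\longrightarrow \OO_L(m-1)\longrightarrow \cT_{\CC\cup L}|_L\longrightarrow \OO_L(-(a+b+m))\longrightarrow 0 .
\]

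On $L\cong\PP^1$ this extension either splits or not, a dichotomy governed by the vanishing of the relevant $\Ext^1=H^1(\OO_L(\,\cdot\,))$. In the two cases the ordered exponents $(a_L,b_L)$ of $\cT_{\CC\cup L}|_L$, which satisfy $a_L+b_L=a+b+1$, take their two extreme values; translating back through the value of $m$ shows that the effective count $|\CC\cap L|+\epsilon(\CC\cup L,\CC)$ either equals $a+1$ or is at least $b+1$, which is exactly assertion (ii). Part (i) runs along the same lines, now with $L$ an honest component: one compares $\cT_{\CC}$ with $\cT_{\CC'}$ through a short exact sequence whose $L$-supported term records $|\CC'\cap L|$ together with $\epsilon(\CC,\CC')$, restricts to $L$, and reads off the two alternatives from the splitting or non-splitting of the resulting extension, using once more that $\cT_{\CC}|_L=\OO_L(-a)\oplus\OO_L(-b)$ is rigid while the exponents of $\cT_{\CC'}|_L$ are ordered and sum to $a+b-1$.

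The main obstacle is the precise determination of the $L$-supported term, i.e.\ establishing both exact sequences \emph{with the correct $\epsilon$-correction}: one must carry out the local analysis at the non-transverse or singular points of $\CC\cap L$ and verify that the deviation from the transverse count is governed exactly by $\epsilon(\cdot,\cdot)$, and one must check that the sub-line-bundle appearing after restriction is saturated so that its degree is read off correctly. A secondary point requiring care is to confirm that only the two extremal splitting types can occur---equivalently, that the non-free partner sheaf ($\cT_{\CC\cup L}$ or $\cT_{\CC'}$) cannot restrict to an intermediate balanced type---which is where the ordering $a\le b$ of the exponents of the \emph{free} curve $\CC$ is used decisively.
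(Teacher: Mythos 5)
Your framework coincides with the paper's: the $\epsilon$-corrected deletion/addition sequences (the paper's sequences (2)--(4), whose cokernel degree is quoted from Dimca's theorem, so that input is a citation rather than the ``main obstacle'' you describe), restriction to $L\cong\PP^1$, and the rigidity $\cT_{\CC}|_L=\OO_L(-a)\oplus\OO_L(-b)$ for the free bundle. The genuine gap is the mechanism by which you claim the numerical dichotomy follows. You correctly produce the extension $0\to\OO_L(m-1)\to\cT_{\CC\cup L}|_L\to\OO_L(-(a+b+m))\to 0$, but then assert that the alternative in the theorem is governed by whether this extension splits, i.e.\ by the vanishing of $\Ext^1=H^1(\OO_L(\cdot))$. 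That cannot work: the restricted four-term sequence, hence this extension, exists for whatever value $m$ actually has, so its splitting or non-splitting (equivalently, the splitting type of the partner sheaf $\cT_{\CC\cup L}|_L$, resp.\ $\cT_{\CC'}|_L$ in part (i)) imposes no constraint whatsoever on $m$ --- and $m$ is exactly the quantity the theorem constrains. Your closing worry, that one must exclude ``intermediate balanced'' restriction types of the partner sheaf, confirms the confusion: nothing excludes them (non-split extensions on $\PP^1$ do balance the splitting type), and the theorem makes no claim about them.

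What actually forces the dichotomy --- and is the argument the paper runs in the proof of Theorem \ref{thm:curve_line_comb_chernA}, of which the free case here is the instance $s=0$ --- is the mere existence of the surjection onto the cokernel. Restriction is right exact, so one gets a surjection of sheaves on $\PP^1$ from a \emph{fixed split bundle}: in part (i), $\OO_L(-a)\oplus\OO_L(-b)\twoheadrightarrow\OO_L\bigl(1-|\CC'\cap L|-\epsilon(\CC,\CC')\bigr)$; in part (ii), after dualizing so that the surjection emanates from the free bundle (your sequence $0\to\cT_{\CC\cup L}\to\cT_{\CC}\to\OO_L(m)\to 0$ already has this form), $\OO_L(-a)\oplus\OO_L(-b)\twoheadrightarrow\OO_L(m)$ with $m=|\CC\cap L|+\epsilon(\CC\cup L,\CC)-(a+b+1)$. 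Now a surjection $\OO_L(e_1)\oplus\OO_L(e_2)\twoheadrightarrow\OO_L(d)$ with $e_1\le e_2$ can exist only if $d=e_1$ or $d\ge e_2$: its two components are sections of $\OO_L(d-e_1)$ and $\OO_L(d-e_2)$ with no common zero, so if $e_1\le d<e_2$ the second component vanishes identically and the first must be nowhere zero, forcing $d=e_1$, while if $d<e_1$ the map is zero. With $e_1=-b$, $e_2=-a$ this single observation yields both parts at once, and it also produces the asymmetry your proposal is blind to --- equality at $b+1-\epsilon(\CC,\CC')$ in (i) but at $a+1-\epsilon(\CC\cup L,\CC)$ in (ii) --- which comes from the opposite directions of the two degree translations, $d=1-|\CC'\cap L|-\epsilon(\CC,\CC')$ versus $d=|\CC\cap L|+\epsilon(\CC\cup L,\CC)-(a+b+1)$.
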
 

In our first main result, we extend the statements of  Theorem \ref{thm:curve_line_comb} under a weaker hypothesis than freeness, a hypothesis on the factorization of the Chern polynomial.This result generalizes to curves Theorem \ref{thm:abe_FV_1}.

 \begin{customthm}{A}
\label{thm:curve_line_comb_chernA}
Let $\CC$ be a reduced curve in $\PP^2$ such that 
$c_{\mathcal{T}_{\CC}}(t) = (at-1)(bt-1),\; a,b \in \Z_{>0},$
$ a \leq b$. 
\begin{enumerate}
    \item Let  $L \in  \Irr(\CC)$ be a line and $\CC' \coloneqq \bigcup_{C \in \Irr(\CC) \setminus\{ L\} }C$.
Then:
\begin{enumerate}
\item  $|\CC' \cap L| \leq a+1  -  \epsilon(\CC,\CC')$ or  $|\CC' \cap L| \geq b+1  -  \epsilon(\CC,\CC')$;
\item If  $|\CC' \cap L| = a+1 - \epsilon(\CC,\CC') $ or $|\CC' \cap L|  = b+1- \epsilon(\CC,\CC')$, then $\CC$ is free with exponents $(a,b)$;
\item  If $|\CC' \cap L|  > b+1 - \epsilon(\CC,\CC')$, then $\CC$ is not free.
\end{enumerate}
\item  Let $L \subset \PP^2$ be a line such that $L \notin \Irr(\CC)$. Then: 
\begin{enumerate}
\item $|\CC \cap L|\leq  a+1  - \epsilon(\CC \cup L, \CC)$ or $|\CC \cap L|  \geq b+1 - \epsilon(\CC \cup L, \CC)$;
\item If  $|\CC \cap L| = a+1 - \epsilon(\CC \cup L, \CC) $ or $|\CC \cap L|  = b+1- \epsilon(\CC \cup L, \CC)$, then $\CC$ is free with exponents $(a,b)$;
\item If $|\CC \cap L|  < a+1 - \epsilon(\CC \cup L, \CC)$, then $\CC$ is not free.
\end{enumerate}
\end{enumerate}
\end{customthm}

Moreover, we construct an example which shows that Theorem \ref{thm:curve_line_comb} does not hold if we replace the freeness condition in the hypothesis by the factorization of the associated Chern polynomial,  see  Example \ref{ex:chern_factor}.\\

There is a growing interest in the study of a natural generalization of projective lines arrangements inside the class of projective plane curves, the so called arrangements of conics and lines (or CL arrangements), see for instance \cite{ST, BMR, Pk}. They are defined as curves whose irreducible components are smooth irreducible curves of genus $0$, i.e. lines and smooth conics. This brings up the question whether a result similar to Theorem \ref{thm:curve_line_comb_chernA}  holds when we consider a smooth conic $C_0$ instead of a line $L$. More precisely,  if one can give obstructions on the number of singular points of a curve $\CC$, situated on a smooth conic  $C_0 \in  \Irr(\CC)$ and on the number of intersection points of $\CC$ to an arbitrary smooth conic  $C_0$,  under certain conditions on the curve $\CC$.
In \cite{M} one gives a positive answer to this question, when $\CC$ is a free curve: 
\begin{theorem}
\label{thm:curve_conic_comb_intro}
Let $\CC$ be a reduced free curve  in $\PP^2$ with exponents $(a,b), \; a \leq b$.

 {\it (i)} If $C_0 \in \Irr(\CC)$ is an  arbitrary smooth conic, let
 $\CC' \coloneqq \bigcup_{C \in \Irr(\CC) \setminus\{C_0\} }C$
 and $k =|\CC' \cap C_0|+\epsilon(\CC, \CC')$. 
\begin{enumerate}
\item  If $k=2m$, then the only possible values for $m$ are $m=b$ or $m \leq a$.
\item $k=2m+1$, then $m=a-1=b-1$ or $m \leq a-1$.
\end{enumerate}
 {\it (ii)} If  $C_0 \notin  \Irr(\CC)$ is an arbitrary  smooth conic,  let $k = |\CC \cap C_0|+\epsilon(\CC \cup C_0, \CC)$. 
\begin{enumerate}
\item If $k=2m$, then either $m=a$ or $m \geq b$.
\item If $k=2m+1$, then either $m=a=b$ or $m \geq b$.
\end{enumerate}
\end{theorem}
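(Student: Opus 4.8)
The plan is to reduce both statements to a single numerical dichotomy governed by the splitting type of $\cT_{\CC}$ restricted to $C_0$, and then to unwind that dichotomy into the stated parity cases by elementary arithmetic. First I would identify $C_0$ with $\PP^1$ through the degree-two Veronese embedding; under this identification $\OO(j)|_{C_0}=\OO_{\PP^1}(2j)$, so the restriction of any line bundle on $\PP^2$ lands in an \emph{even} twist on $\PP^1$. Since $\CC$ is free with exponents $(a,b)$, we have $\cT_{\CC}\cong\OO(-a)\oplus\OO(-b)$ globally, hence $\cT_{\CC}|_{C_0}\cong\OO_{\PP^1}(-2a)\oplus\OO_{\PP^1}(-2b)$; in particular the split type on every smooth conic is exactly $(2a,2b)$, as freeness forbids jumping conics. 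With $k$ as in the statement, I claim the theorem is equivalent to the dichotomy: in case (i), $k\le 2a$ or $k=2b$; in case (ii), $k=2a$ or $k\ge 2b$. Indeed, because $2a$ and $2b$ are even, splitting these two inequalities according to the parity of $k$ reproduces verbatim the four itemized sub-cases, the exceptional equalities $a=b$ being exactly the balanced boundary where $2a=2b$. This is the conic analogue of Theorem \ref{thm:curve_line_comb}, the ``$+1$'' shift there being replaced by the doubling coming from the Veronese.

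Next I would produce the exact sequence that identifies $k$ with an intrinsic invariant on $C_0$. In case (i), a logarithmic field tangent to all of $\CC=\CC'\cup C_0$ is tangent to $\CC'$, giving an inclusion $\cT_{\CC}\hookrightarrow\cT_{\CC'}$ whose cokernel $Q$ is supported on $C_0$; the Chern class computation $c_1(\cT_{\CC'})-c_1(\cT_{\CC})=2$ shows $Q$ is a rank-one sheaf along $C_0$. In case (ii) one uses instead $\cT_{\CC\cup C_0}\hookrightarrow\cT_{\CC}$ with cokernel $Q'$ on $C_0$. The heart of this step is to prove that the degree of $Q$ (resp. $Q'$) on $C_0\cong\PP^1$ equals $k$, that is, that $\epsilon(\CC,\CC')$ (resp. $\epsilon(\CC\cup C_0,\CC)$) is precisely the scheme-theoretic excess upgrading the naive count $|\CC'\cap C_0|$ (resp. $|\CC\cap C_0|$) to the length of the cokernel along $C_0$. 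This is where the definition of $\epsilon$ from \cite{Dimca} is meant to do its work.

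To extract the dichotomy I would then restrict the relevant sequence to $C_0$, tracking the $\mathrm{Tor}_1$ term, which equals $Q\otimes N_{C_0}^{\vee}=Q\otimes\OO_{\PP^1}(-4)$ since $C_0$ is Cartier with conormal $\OO_{\PP^1}(-4)$. This yields a four-term sequence on $\PP^1$ whose extreme bundle is the \emph{known} $\OO_{\PP^1}(-2a)\oplus\OO_{\PP^1}(-2b)$. A cohomology computation on $\PP^1$ should then confine the data defining $Q$ to one of two regimes: either it is absorbed by the sub-line-bundle $\OO_{\PP^1}(-2a)$, forcing $k\le 2a$, or it genuinely engages the $\OO_{\PP^1}(-2b)$ summand, rigidifying $k=2b$ exactly; case (ii) is symmetric with the roles of sub-object and quotient exchanged, giving $k=2a$ or $k\ge 2b$. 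Because every twist in sight is even, these two regimes are exactly the ones whose parity refinement is the assertion.

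\emph{Main obstacle.} The crux is the middle step: showing that $\epsilon$ is exactly the local correction turning $k$ into the intrinsic degree of the cokernel along $C_0$. This requires a careful local analysis at the points of $C_0\cap\CC'$ (in particular at tangencies, and at singularities of $\CC$ lying on $C_0$) together with control of any zero-dimensional torsion in $Q$, so that the restriction to $C_0$ and the resulting $\mathrm{Tor}$ contribution are computed cleanly. A secondary, lighter difficulty is the boundary bookkeeping when $a=b$, where the two regimes of the $\PP^1$ argument meet and must be verified to produce precisely the exceptional odd cases $m=a-1=b-1$ and $m=a=b$.
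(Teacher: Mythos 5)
Your reduction is the right one, and it is essentially the route taken here and in \cite{M} (this paper only recalls the statement from \cite{M}, but its proofs of Theorems \ref{thm:curve_conic_comb_chern} and \ref{thm:curve_conic_comb_chern_add} show the intended argument): freeness gives $\cT_{\CC}|_{C_0}=\OO_{\PP^1}(-2a)\oplus\OO_{\PP^1}(-2b)$ via the Veronese, and the four itemized cases are exactly the parity splitting of the dichotomies $k\le 2a$ or $k=2b$ (case (i)), $k=2a$ or $k\ge 2b$ (case (ii)). However, there are two genuine gaps. The first is the step you yourself flag as the ``main obstacle'': that the cokernel of the inclusion of logarithmic bundles is a line bundle on $C_0$ whose degree is the naive intersection count corrected by $\epsilon$. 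This is not something your sketch establishes, and it is not routine local bookkeeping; it is precisely \cite[Theorem 2.3]{Dimca} (see also \cite{STY}), recorded in this paper as \eqref{eq:sequence_innitial} with $\deg(D)=2-2g_2-r-\epsilon(\CC_1\cup\CC_2,\CC_1)$, which specializes to \eqref{eq:seq_conic}--\eqref{eq:seq_gen'}. Since this degree formula is the load-bearing input, leaving it as ``analysis to be done'' leaves no proof; citing it closes the gap. Note also your normalization is off: the quotient in \eqref{eq:seq_gen'} is $i_*\OO_{C_0}(-\frac{k}{2})$, of degree $-k$ on $C_0\cong\PP^1$, while in your deletion-direction sequence \eqref{eq:conic_dual} the cokernel has degree $k+2-2\deg(\CC')$, not $k$; the twist by $\deg(\CC')$ must be tracked or the arithmetic fails.

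The second gap is in your case (i) endgame. You chose the sequence $0\to\cT_{\CC}\to\cT_{\CC'}\to Q\to 0$, which puts the \emph{known} bundle in the subsheaf position. Restricting to $C_0$ and using $\mathcal{T}or_1(Q,\OO_{C_0})=Q\otimes\OO_{\PP^1}(-4)$ gives, as you say, an injection $Q(-4)\hookrightarrow\OO_{\PP^1}(-2a)\oplus\OO_{\PP^1}(-2b)$; but injectivity of a map from a line bundle into a rank-$2$ bundle only forces $\deg Q(-4)\le -2a$, which unwinds to $k\le 2b$ --- strictly weaker than the dichotomy. To extract the dichotomy from your sequence one must show $Q(-4)$ is a sub\emph{bundle} (saturated): its quotient is the image of $\cT_{\CC}|_{C_0}\to\cT_{\CC'}|_{C_0}$, hence embeds in the locally free $\cT_{\CC'}|_{C_0}$ and is torsion-free, and only then does the no-common-zero condition on the two components of $Q(-4)\to\OO_{\PP^1}(-2a)\oplus\OO_{\PP^1}(-2b)$ yield $k\le 2a$ or $k=2b$. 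Your ``absorbed/engages'' description does not contain this saturation argument (and matches the regimes backwards: mapping isomorphically onto the $\OO_{\PP^1}(-2a)$ summand corresponds to $k=2b$, while a nonzero component into $\OO_{\PP^1}(-2b)$ corresponds to $k\le 2a$). The clean fix, and what the paper does, is to dualize first, i.e.\ use \eqref{eq:seq_gen'}, so that the free bundle \emph{surjects} onto $i_*\OO_{C_0}(-\frac{k}{2})$; tensoring that surjection by $\OO_{C_0}$ gives $\OO_{\PP^1}(-2a)\oplus\OO_{\PP^1}(-2b)\twoheadrightarrow\OO_{\PP^1}(-k)$, from which the dichotomy is immediate. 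Your case (ii) is fine as described, since there the free bundle already sits in the surjecting position.
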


 In a similar vein to Theorem \ref{thm:curve_line_comb_chernA}, we may wonder whether some of the conclusions of  Theorem \ref{thm:curve_conic_comb_intro} hold if we replace the freeness hypothesis by the factorization of the Chern polynomial.
 We answer this question positively in our next two main results.
\begin{customthm}{B1}
\label{thm:curve_conic_comb_chern} 
Let $\CC$ be a reduced curve in $\PP^2$ such that 
$c_{\mathcal{T}_{\CC}}(t) = (at-1)(bt-1),\; a,b \in \Z_{>0},$
$ a \leq b$. Let $C_0  \in  \Irr(\CC)$ be a smooth conic, $\CC' \coloneqq \bigcup_{C \in \Irr(\CC) \setminus\{C_0\} }C$ and let
$k :=|\CC' \cap C_0|+\epsilon(\CC, \CC')$. Then:
\begin{enumerate}
\item  If $k=2m$, either $m\geq b$ or $m \leq a$. Moreover, 
\begin{enumerate}
    \item If $m=a$ or $m=b$ then $\CC$ is free with exponents $(a,b)$;
    \item If $m<a$ both possibilities, free and non free, occur;
    \item If  $m>b$ then $\CC$ is not free.
\end{enumerate}
\item If $k=2m+1$, either $m \leq a-1$ or $m \geq b-1$. Moreover,
\begin{enumerate}
    \item If $m = a -1$ or $m = b - 1$, then
$\CC$ is free with exponents $(a, b)$; if, furthermore, $m = b - 1$, then $a = b = m + 1$;
\item If $m<a-1$  then both possibilities, free and non free, occur;
\item  If $m>b-1$ then $\CC$ is not free.
\end{enumerate}
\end{enumerate}
\end{customthm}

\begin{customthm}{B2}
\label{thm:curve_conic_comb_chern_add}
Let $\CC$ be a reduced curve in $\PP^2$ such that 
$c_{\mathcal{T}_{\CC}}(t) = (at-1)(bt-1),\; a,b \in \Z_{>0},$
$ a \leq b$. Let $C_0  \notin \Irr(\CC)$ be a smooth conic and let  $k :=|\CC \cap C_0|+\epsilon(\CC \cup C_0, \CC)$. Then:
\begin{enumerate}
\item  If $k=2m$, either $m\leq a$ or $m \geq b$. 

Moreover, 
\begin{enumerate}
    \item If $m=a$ or $m=b$ then $\CC$ is free with exponents $(a,b)$;
    \item If $m<a$ then $\CC$ is not free;
    \item If  $m>b$ both possibilities, free and non free, occur.
    \end{enumerate}

\item $k=2m+1$, either $m \leq a$ or $m \geq b$. Moreover,

\begin{enumerate}
    \item If $m = a$ then necessarily  $m=b$ and $\CC$ is free with exponents $(a,a)$;
    \item If $m=b$ then $\CC$ is free with exponents $(a,b)$;
\item If $m<a$   then $\CC$ is not free; 
\item  If $m>b$ then both possibilities, free and non free, occur.
\end{enumerate}
\end{enumerate}

\end{customthm}

The proofs of Theorems \ref{thm:curve_line_comb_chernA}, \ref{thm:curve_conic_comb_chern} and \ref{thm:curve_conic_comb_chern_add} rely on the existence of short exact sequences of vector bundles arising from addition-deletion operations of smooth rational curves (i.e. lines and smooth conics), applied to an arbitrary curve.
\\

We propose a new approach to the addition-deletion of a smooth conic to a curve, in subsection \S~ \ref{subsect:add_dell}. The advantage of this approach is that the exact sequence we use has a parameter that depends solely on Tjurina numbers, as opposed to \cite{M}, where the parameter involved in the conditions that describe the result of addition-deletion is defined in terms of both Tjurina and Milnor numbers, see \cite[Theorems 3.14 and 3.16]{M}.  
Moreover, this yields a formula that involves the Milnor numbers of the singularities of a curve $\CC$ and the Milnor numbers of the singularities of its addition $\CC \cup C_0$ that are situated on the smooth conic $C_0 \notin \Irr(\CC)$. This formula is stated in Proposition \ref{prop:mu_formula}. To the best of our knowledge, this formula was not previously known.

\smallskip

Our last main results, proved in section  \S~ \ref{sect:conic}, are two general results on rank $2$ vector bundles that are key in the proofs of Theorems \ref{thm:curve_conic_comb_chern} and \ref{thm:curve_conic_comb_chern_add}. The first one describes the splitting type of the restriction to a smooth conic of an arbitrary vector bundle, under some hypothesis on its Chern classes.
\begin{customthm}{C}
\label{thm:conic_restr}
Let $\mathcal{E}$ be a rank $2$ vector bundle on $\PP^2$  such that $c_1(\mathcal{E}) = c \leq 0$ and $c_2(\mathcal{E})  =0$. Then, for any smooth conic $C \subset \PP^2$, there exists a positive integer $r$, more precisely $r=0$ and in this case $\mathcal{E}=\OO_{\PP^2} \oplus \OO_{\PP^2}(c)$ or $r\ge 2$, depending on $C$, such that the restriction of $\mathcal{E}$ to $C$ splits as
 $$\mathcal{E}|_ {C}= \mathcal{O}_{C}(\frac{r}{2}) \oplus \mathcal{O}_{C}(c-\frac{r}{2}).$$
 \end{customthm}

Finally we give a freeness criterion for rank $2$ vector bundles, in terms of restrictions of the vector bundle to smooth conics, in line with Yoshinaga's freeness criterion relative to the restriction of a rank $2$ vector bundle to a line (recalled in Theorem \ref{thm:pi_L}).
 
\begin{customthm}{D}
\label{thm:conic_criterion}
Let $\cE$ be a rank $2$ vector bundle on $\PP^2$ such that  $c_{\mathcal{E}}(t) = (at-1)(bt-1)$.
 Then there exists a smooth conic $C_0 \subset \PP^2$ such that  $\cE|_{C_0} =\OO_{C_0}(-a) \oplus  \OO_{C_0}(-b)$ if and only if  $\cE$ splits as $\cE =\OO_{\PP^2}(-a) \oplus  \OO_{\PP^2}(-b).$
\end{customthm}

\section{Preliminaries}
\label{sec:preliminaries}

Let $\CC$ be a reduced curve in $\PP^2 \coloneqq \PP^2 \C$ defined as the zero set of the homogeneous polynomial $f_{\CC} \in S:= \C[x,y,z]$. Denote by $D_0(\CC)$  the logarithmic derivations module defined by
$$ 
D_0(\CC) \coloneqq \{\theta \in Der(S) \; |\; \theta(f_{\CC}) = 0\}.
$$
and by $\mathcal{T}_{\CC}$ the sheafification of the derivations module $D_0(\CC)$.\\

The curve $\CC$ is called free if $D_0(\CC)$ is free as an $S$-module. This is equivalent to saying that the associated vector bundle, $\mathcal{T}_{\CC}$, is free, i.e., it splits as a direct sum of line bundles.

Let us recall a well known freeness criterion for rank $2$ vector bundles on $\PP^2$.

\begin{theorem}(\cite[Theorem 1.45]{Y})
\label{thm:pi_L}
Let $\cE$ be a rank $2$ vector bundle on $\PP^2$ and $L \subset \PP^2$ a line. Let $\cE|_{L} =\OO_L(-e_1) \oplus  \OO_L(-e_2)$. Then $c_2(\cE) \geq e_1 e_2$, furthermore 
$$
c_2(\cE) - e_1 e_2 = \dim_{\k}\mathrm{Coker}(\Gamma_*(\cE) \overset{\pi_L}{\longrightarrow}(\Gamma_*(\cE|_{L}))
$$
where $c_2(\cE)$ is the second Chern number of $\cE$ and $\pi_L$ is the morphism of graded modules induced by the restriction to $L$ of the vector bundle $\cE$. Moreover, $\cE$ is splitting if and only if $c_2(\cE) = e_1 e_2$.
\end{theorem}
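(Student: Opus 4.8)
The plan is to derive all three assertions from a single restriction exact sequence, together with Riemann--Roch bookkeeping and Horrocks' splitting criterion. First I would fix a linear form $\ell$ cutting out $L$ and tensor the structure sequence $0 \to \OO_{\PP^2}(-1)\xrightarrow{\ell}\OO_{\PP^2}\to\OO_L\to 0$ by $\cE(m)$, obtaining
$$0 \to \cE(m-1)\xrightarrow{\ell}\cE(m)\to \cE|_L(m)\to 0$$
for every $m$. Passing to the long exact cohomology sequence and assembling over all $m$, the graded module $M := \bigoplus_m H^1(\cE(m))$ is of finite length (standard for a rank $2$ bundle on $\PP^2$, via Serre vanishing for $m\gg 0$ and Serre duality for $m\ll 0$), and the degree-$m$ piece of $\Coker\pi_L$ is identified with $\Ker(\ell\colon M_{m-1}\to M_m)$. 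In particular $\Coker\pi_L$ is finite dimensional, with $\dim_\k\Coker\pi_L = \sum_m \dim_\k\Ker(\ell\colon M_{m-1}\to M_m)\ge 0$.

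The heart of the argument is the identity $\dim_\k\Coker\pi_L = c_2(\cE)-e_1e_2$, which I would establish by Euler-characteristic bookkeeping. Writing $d_m := \dim_\k\Coker\bigl(H^0(\cE(m))\to H^0(\cE|_L(m))\bigr)$, the four-term exact sequence from the display gives $d_m = h^0(\cE|_L(m)) - h^0(\cE(m)) + h^0(\cE(m-1))$. Since $d_m = 0$ for $|m|\gg 0$, I would sum over $-N\le m\le N$ with $N\gg 0$: the terms $h^0(\cE(m))-h^0(\cE(m-1))$ telescope to $h^0(\cE(N))=\chi(\cE(N))$, while $\sum_m h^0(\cE|_L(m))$ is an explicit sum of binomial coefficients in $e_1,e_2$ (using $c_1(\cE)=\deg(\cE|_L)=-e_1-e_2$). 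Comparing the two closed-form expressions through Riemann--Roch on $\PP^2$, the $N$-dependent terms cancel and the surviving constant is exactly $c_2(\cE)-e_1e_2$; this simultaneously gives $c_2(\cE)\ge e_1e_2$ and the cokernel formula. I expect the main obstacle to be precisely this computation: one must track carefully the regimes $m\gg 0$ and $m\ll 0$, where $h^0(\cE|_L(m))$ and $\chi(\cE|_L(m))$ disagree, so that the finite telescoping sum converges to the correct constant.

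Finally, for the splitting equivalence: if $\cE$ splits into line bundles, then $\cE|_L$ splits with the same exponents, so $c_2(\cE)=e_1e_2$ is immediate. Conversely, $c_2(\cE)=e_1e_2$ forces $\Coker\pi_L=0$ by the formula, hence $\ell\colon M_{m-1}\to M_m$ is injective for every $m$. As $M$ has finite length, $M_m=0$ for $m\gg 0$, and injectivity then propagates downward to give $M_m=0$ for all $m$, i.e.\ $H^1_*(\cE)=0$. Horrocks' criterion then yields that $\cE$ splits as a direct sum of line bundles, which completes the proof.
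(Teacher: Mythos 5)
Your proof is correct, but note that there is nothing in the paper to compare it against: the paper does not prove Theorem \ref{thm:pi_L} at all, it quotes it as a known criterion from Yoshinaga \cite[Theorem 1.45]{Y} and uses it as a black box. Judged on its own merits, your argument is the standard proof of this result and all the steps check out: the identification of the degree-$m$ piece of $\Coker \pi_L$ with $\Ker\bigl(\ell\colon H^1(\cE(m-1))\to H^1(\cE(m))\bigr)$ follows from the long exact sequence of $0\to\cE(m-1)\to\cE(m)\to\cE|_L(m)\to 0$; the finiteness of $H^1_*(\cE)$ (Serre vanishing on one side, Serre duality plus $\cE^\vee\cong\cE(-c_1)$ on the other) makes both the sum and the telescoping legitimate; and the arithmetic does close up, since for $N\gg 0$ one gets
\begin{equation*}
\dim_\k\Coker\pi_L \;=\; \binom{N-e_1+2}{2}+\binom{N-e_2+2}{2}-\chi(\cE(N)) \;=\; c_2(\cE)-e_1e_2,
\end{equation*}
which simultaneously gives the inequality $c_2(\cE)\ge e_1e_2$ and the cokernel formula. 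The splitting equivalence is also handled correctly: vanishing of the cokernel gives injectivity of $\ell$ on each graded piece of the finite-length module $M=H^1_*(\cE)$, descending induction from $M_m=0$ for $m\gg 0$ kills $M$ entirely, and Horrocks' criterion on $\PP^2$ then splits $\cE$. This is, in substance, the argument in the cited source, so your reconstruction is faithful to the intended proof even though the paper itself omits it.
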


  If  $\CC$ is an arbitrary reduced curve in $\PP^2$ and $p \in \mathrm{Sing}(\CC)$, denote by $\mu_p (\CC),\; \tau_p(\CC)$, the Milnor number, respectively the Tjurina number, of the singularity $p$.  Let 
 $$
 \epsilon_p(\CC) \coloneqq \mu_p(\CC) - \tau_p (\CC)
 $$
One can see $\epsilon_p(\CC)$ as a measure of  the defect from quasihomogeneity of $p \in \mathrm{Sing}(\CC)$, since  $p$ is quasihomogeneous if and only if the Milnor and Tjurina numbers are equal. 
\\

If  $\CC_1, \; \CC_2$ are two reduced curves  with no common irreducible component and  $p \in \CC_1 \cap  \CC_2$, denote
$$
 \epsilon(\CC_1\cup \CC_2, \CC_1)_p
 \coloneqq  \epsilon_p(\CC_1 \cup  \CC_2) -  \epsilon_p(\CC_1)
$$
and 
$$
 \epsilon(\CC_1\cup \CC_2, \CC_1) \coloneqq \sum_{p \in \CC_1 \cap  \CC_2} 
 \epsilon(\CC_1\cup \CC_2, \CC_1)_p.
$$
 Please notice that our notation for the invariant $\epsilon(\cdot,\cdot)$, which was introduced in \cite{Dimca}, is slightly different from the one in \cite{Dimca} and \cite{M}: $\epsilon(\CC_1\cup \CC_2, \CC_1)$ was denoted as $\epsilon(\CC_1, \CC_2)$ in the previously mentioned articles.
 
Let $\CC_2: f_2 =0$.  In what follows, assume moreover that $\CC_2$ is smooth of genus $g_2$. Then, by \cite[Theorem 2.3]{Dimca} (see also \cite{STY}), there exists a short exact sequence of logarithmic sheaves (actually, vector bundles) on $\PP^2$:

\begin{equation}
\label{eq:sequence_innitial}
0 \rightarrow \mathcal{T}_{\CC_1}(1-\deg(f_2)) \overset{f_2}\longrightarrow  \mathcal{T}_{\CC_1 \cup \CC_2}(1) \rightarrow i_*(\mathcal{O}_{\CC_2}(D))\rightarrow 0.
\end{equation}

where $i: C_2 \rightarrow \PP^2$ is the inclusion. Moreover, the degree of the divisor $D$ is given by the following formula, where $r$ is the number of points in the reduced scheme of $\CC_1 \cap \CC_2$:

$$\deg(D) = 2-2g_2-r - \epsilon(\CC_1\cup \CC_2, \CC_1)$$


\section{On the number of singularities of $\CC$ situated on a line}

In this section, we assume $\CC$ is a reduced curve in $\PP^2$ such that there exists a line $L \in \Irr(\CC)$, and we denote $\CC' \coloneqq \cup_{C \in  \Irr(\CC) \setminus \{L\}}C$.\\

This next exact sequence is  just a particular case of the exact sequence \eqref{eq:sequence_innitial}, where we take $\CC_1 = \CC', \; \CC_2 = L$ a line, and we use the fact that a divisor on a line is determined by its degree.

\begin{equation}
\label{eq:sequence}
0 \rightarrow \mathcal{T}_{\CC'}(-1) \overset{\alpha_L}\longrightarrow  \mathcal{T}_{\CC} \rightarrow \mathcal{O}_L(1-|\CC' \cap L| -\epsilon(\CC, \CC')) \rightarrow 0.
\end{equation}

 By dualization of the exact sequence \eqref{eq:sequence}, 
 we get 
\begin{equation}
\label{eq:dual_line}
0 \rightarrow \mathcal{T}^\vee_{\CC} \rightarrow  \mathcal{T}^\vee_{\CC'}(1) \rightarrow \mathcal{O}_L(|\CC' \cap L| + \epsilon(\CC, \CC') ) \rightarrow 0.
\end{equation}

 Since $\mathcal{T}^\vee_{\CC}=\mathcal{T}_{\CC}(-c_1(\mathcal{T}))=\mathcal{T}_{\CC}(\deg(\CC)-1)$, the exact sequence (\ref{eq:dual_line}) becomes 
\begin{equation}
\label{eq:dual_line2}
0 \rightarrow \mathcal{T}_{\CC} \rightarrow  \mathcal{T}_{\CC'} \rightarrow \mathcal{O}_L(|\CC' \cap L| + \epsilon(\CC,\CC') +1-\deg(\CC)) \rightarrow 0.
\end{equation}

Let $c_1(\mathcal{T}_{\CC}), \; c_2(\mathcal{T}_{\CC})$ be the first, respectively the second Chern number of $\mathcal{T}_{\CC}$ and  let
$$c_{\mathcal{T}_{\CC}}(t) = 1 + c_1(\mathcal{T}_{\CC}) t + c_2(\mathcal{T}_{\CC}) t^2$$
be  the Chern polynomial associated to $\mathcal{T}_{\CC}$.\\

Let us recall a very useful splitting result from \cite{FV}.

\begin{lemma}
\label{lemma:c2_split} (\cite[Lemma 3.3]{FV})
Let $\mathcal{E}$ be a rank $2$ vector bundle on $\PP^2$  such that $c_1(\mathcal{E}) = c \leq 0$ and $c_2(\mathcal{E})  =0$. Then, for any line $L \subset \PP^2$, the restriction  $\mathcal{E}|_L$ of $\mathcal{E}$ to $L$ splits as
 $$\mathcal{E}|_L = \mathcal{O}_L(s) \oplus \mathcal{O}_L(c-s)$$ for some non negative integer $s$ depending on $L$.
\end{lemma}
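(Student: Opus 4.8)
The plan is to combine the fact that every vector bundle on $\PP^1$ splits with Yoshinaga's inequality, recorded here as Theorem~\ref{thm:pi_L}, which is precisely the tool that converts the hypothesis $c_2(\mathcal{E})=0$ into a sign constraint on the splitting exponents along $L$. The whole point is that the numerical data $c_1(\mathcal{E})=c$ and $c_2(\mathcal{E})=0$ pin down both the sum and the sign of the product of the exponents.

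First I would fix a line $L\subset\PP^2$ and invoke Grothendieck's theorem on $L\cong\PP^1$ to write $\mathcal{E}|_L=\OO_L(-e_1)\oplus\OO_L(-e_2)$ for some integers $e_1,e_2$. Restricting first Chern classes gives $-e_1-e_2=\deg(\mathcal{E}|_L)=c$, so the two exponents already sum to $c$, and it remains only to arrange that one of them is non-negative. Next I would apply Theorem~\ref{thm:pi_L} to this very splitting: it yields $c_2(\mathcal{E})\ge e_1e_2$, and since $c_2(\mathcal{E})=0$ by hypothesis we obtain $e_1e_2\le 0$. Consequently at least one of $e_1,e_2$ is non-positive; relabelling if necessary, assume $e_1\le 0$ and set $s:=-e_1\ge 0$. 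Then $c-s=(-e_1-e_2)-(-e_1)=-e_2$, so $\mathcal{E}|_L=\OO_L(s)\oplus\OO_L(c-s)$ with $s\ge 0$, which is exactly the asserted form; the dependence of $s$ on $L$ is built into the choice of Grothendieck splitting (distinct lines may be jumping lines with distinct exponents).

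As structured, the argument has essentially no obstacle once Theorem~\ref{thm:pi_L} is granted: the entire geometric content is concentrated in the inequality $c_2(\mathcal{E})\ge e_1e_2$, and everything else is bookkeeping. The two points that demand care are the signs (the theorem is phrased with exponents $-e_i$, so that $e_1e_2\le 0$ is what says one genuine exponent is $\ge 0$) and the harmless observation that the stated hypothesis $c\le 0$ is not in fact needed for the conclusion $s\ge 0$, only for its intended applications. If one wished instead to avoid citing Yoshinaga's criterion, the main obstacle would become reproving the bound $c_2(\mathcal{E})\ge e_1e_2$ from scratch: this would proceed from the restriction sequence $0\to\mathcal{E}(-1)\to\mathcal{E}\to\mathcal{E}|_L\to 0$ by computing the length of the cokernel of the restriction map $\Gamma_*(\mathcal{E})\to\Gamma_*(\mathcal{E}|_L)$, and that length computation is where the real work would lie.
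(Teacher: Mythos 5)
Your argument is correct: Grothendieck's theorem on $L\cong\PP^1$ plus restriction of $c_1$ pins down the sum $-e_1-e_2=c$, and the inequality $c_2(\mathcal{E})\ge e_1e_2$ from Theorem~\ref{thm:pi_L} together with $c_2(\mathcal{E})=0$ forces $e_1e_2\le 0$, so one genuine exponent is non-negative; the remaining sign bookkeeping is right, as is your side remark that $c\le 0$ is not actually needed for the conclusion. However, this is a genuinely different route from the source. The paper does not prove this lemma at all --- it quotes it from \cite[Lemma 3.3]{FV} --- and both the proof there and the paper's own in-house analogue for conics (the proof of Theorem~\ref{thm:conic_restr}) proceed by bundle geometry rather than through Yoshinaga's criterion: since $c_1^2-4c_2=c^2\ge 0$, the bundle is semistable or unstable (Schwarzenberger, \cite[Lemma 1.2.7]{OSS}), so the minimal twist $t$ with $H^0(\mathcal{E}(t))\neq 0$ satisfies $2t+c\le 0$, giving an exact sequence $0\to\OO_{\PP^2}\to\mathcal{E}(t)\to I_W(2t+c)\to 0$ with $W$ finite of length $t(t+c)\ge 0$; restricting this sequence to the line (or conic) and analyzing whether it meets $W$ produces the splitting type. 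Your approach buys brevity and uniformity: granted Theorem~\ref{thm:pi_L}, which the paper states for an arbitrary line (jumping lines included), the lemma reduces to three lines with no case analysis on $W\cap L$. The section-theoretic approach buys more reach: it is exactly what survives the passage from lines to conics in Theorem~\ref{thm:conic_restr}, where no analogue of Theorem~\ref{thm:pi_L} is available and where the same argument yields the finer dichotomy ($r=0$ with $\mathcal{E}$ globally split, or $r\ge 2$) that the purely numerical inequality does not see.
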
 

\begin{remark}
\label{lemma:c_*_formulas}
Let $\mathcal{E}$ be a rank $2$ vector bundle on $\PP^2$ . It is well known from classical theory on vector bundle Chern classes that  for all integers $k$, 
$$c_1(\mathcal{E}(k)) = 2k +  c_1(\mathcal{E}) \; \; \mathrm{and}\; \; c_2(\mathcal{E}(k)) = k^2 + k c_1(\mathcal{E}) + c_2(\mathcal{E}).$$
\end{remark}

\subsection{Proof of  Theorem \ref{thm:curve_line_comb_chernA}}
 From the hypothesis we have that  $c_1(\mathcal{T}_{\CC}) = -a-b$ and $c_2(\mathcal{T}_{\CC}) = ab$.
By Remark \ref{lemma:c_*_formulas}, $c_1(\mathcal{T}_{\CC}(a)) = a-b$ and $c_2(\mathcal{T}_{\CC}(a)) =0$.
So we can apply Lemma \ref{lemma:c2_split} for the vector bundle 
$\mathcal{T}_{\CC}(a)$.  It follows that there exists an integer $s =s(L)\geq 0$ such that 
$$\mathcal{T}_{\CC}|_{L}=\mathcal{O}_L(-b-s) \oplus \mathcal{O}_L(-a+s).$$

From \eqref{eq:sequence}, in particular, we have a surjective map 
$$ \mathcal{T}_{\CC} \twoheadrightarrow \mathcal{O}_L(1-|\CC' \cap L| -\epsilon(\CC,\CC')).$$
 Tensor this map by $\mathcal{O}_L$ to obtain again a surjection with domain $\mathcal{T}_{\CC} \otimes \mathcal{O}_L = \mathcal{T}_{\CC}|_{L}:$
 $$ \mathcal{T}_{\CC}|_{L}  \twoheadrightarrow \mathcal{O}_L(1-|\CC' \cap L| -\epsilon(\CC,\CC')),$$
 i.e. a surjection 
 \begin{equation}
 \label{eq:surj}
 \mathcal{O}_L(-b-s) \oplus \mathcal{O}_L(-a+s) \twoheadrightarrow \mathcal{O}_L(1-|\CC' \cap L| -\epsilon(\CC,\CC')). 
 \end{equation}
 
 The fact that the map from \eqref{eq:surj} is a surjection already implies that $1-|\CC' \cap L| -\epsilon(\CC,\CC')\notin  ]-b,-a[$, i.e. claim (1)(a) holds.\\
 
If either $|\CC' \cap L|+\epsilon(\CC,\CC') = a+1$ or $|\CC' \cap L|+\epsilon(\CC,\CC') = b+1$, then this implies $s=0$, so  $\mathcal{T}_{\CC}|_{L} = \mathcal{O}_L(-b) \oplus \mathcal{O}_L(-a)$, i.e. the splitting type of $\mathcal{T}_{\CC}$ onto $L$ is $(a,b)$. Then, since $c_2(\mathcal{T}_{\CC}) = ab$, by Theorem \ref{thm:pi_L}, $\CC$ is free, which proves claim (1)(b).\\
 
  If there exists a line $L \in  \Irr(\CC)$  such that $|\CC' \cap L|+\epsilon(\CC,\CC') > b+1$ then in \eqref{eq:surj}, because of surjectivity, we necessarily have $s>0$. Then, again by Theorem \ref{thm:pi_L}, $\CC$ is not free, which proves claim (1)(c).\\
  
Finally, take $L \subset \PP^2$ a line such that $L \notin \Irr(\CC)$.  Denote for convenience $k:=|\CC \cap L| + \epsilon(\CC \cup L, \CC)$.
To prove claim (2), consider the exact sequence \eqref{eq:dual_line}, where we substitute $\CC$ by $\CC \cup L$ and $\CC'$ by $\CC$:
  $$
 0 \rightarrow \mathcal{T}^\vee_{\CC \cup L} \rightarrow  \mathcal{T}^\vee_{\CC}(1) \rightarrow \mathcal{O}_L(k)  \rightarrow 0.
 $$
 In particular, we have a surjection $ \mathcal{T}^\vee_{\CC}(1) \twoheadrightarrow \mathcal{O}_L(k)$. 
 Since  $ \mathcal{T}^\vee_{\CC}(1) =\mathcal{T}_{\CC}(1 - c_1(\mathcal{T}_{\CC})) = \mathcal{T}_{\CC}(1+a+b),$ we have in fact a surjection 
\begin{equation}
\label{eq:srj}
 \mathcal{T}_{\CC}(1+a+b)  \twoheadrightarrow \mathcal{O}_L(k).
 \end{equation} 
 
  Recall that, by Lemma \ref{lemma:c2_split} (applied for the vector bundle 
$\mathcal{T}_{\CC}(a)$), there exists an integer $s =s(L)\geq 0$ such that 
$\mathcal{T}_{\CC}|_{L}=\mathcal{O}_L(-b-s) \oplus \mathcal{O}_L(-a+s)$, hence 
$$
\mathcal{T}_{\CC}(1+a+b)|_{L}=\mathcal{O}_L(a+1-s) \oplus \mathcal{O}_L(b+1+s).
$$

Tensoring the map \eqref{eq:srj} by $\mathcal{O}_L$, we get a surjection 
$$
\mathcal{O}_L(a+1-s) \oplus \mathcal{O}_L(b+1+s) \twoheadrightarrow \mathcal{O}_L(k).
$$
 It follows that either $k \leq a+1$ or $k \geq b+1$, i.e. claim (2)(a) holds. 
 If $k=a+1$ or $k=b+1$ then $s=0$ proving claim (2)(b) and if $k<a+1$ then necessarily $s>0$ proving claim (2)(c).
  
$\Box$\\

Under quasihomogeneity assumptions (which is in particular the case for line arrangements), the statement of Theorem \ref{thm:curve_line_comb_chernA}  looks much simpler, mirroring perfectly and extending  the similar statements for arrangements of projective lines, \cite[Corollary 1.2]{A2}  and \cite[Proposition 5.2]{FV}:

\begin{corollary}
\label{cor:quasihom}
Let $\CC$ be a reduced curve in $\PP^2$ such that 
$c_{\mathcal{T}_{\CC}}(t) = (at-1)(bt-1),\; a,b \in \Z_{>0},$
$ a \leq b$. Let $L$ in $\PP^2$ be a line such that $L \in  \Irr(\CC)$ and let $\CC' \coloneqq \bigcup_{C \in \Irr(\CC) \setminus\{ L\} }C$.
Assume moreover  that all singularities of $\CC, \CC'$ situated on $L$ are quasihomogeneous. Then:
\begin{enumerate}
\item   $|\CC' \cap L|\notin (a+1 , b+1)$, where $(\cdot \; , \cdot)$ denotes an open interval. 
\item If  $|\CC' \cap L| = a+1$ or $|\CC' \cap L|= b+1$, then $\CC$ is free with exponents $(a,b)$.
\item  If  $|\CC' \cap L| > b+1 $, then $\CC$ is not free.
\end{enumerate}
\end{corollary}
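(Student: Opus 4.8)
The plan is to derive Corollary \ref{cor:quasihom} as an immediate specialization of Theorem \ref{thm:curve_line_comb_chernA}(1), the only work being to show that the quasihomogeneity hypothesis kills the correction term $\epsilon(\CC,\CC')$. First I would recall the definition: for $p \in \mathrm{Sing}(\CC)$ one has $\epsilon_p(\CC) = \mu_p(\CC) - \tau_p(\CC)$, and a singularity is quasihomogeneous precisely when $\mu_p = \tau_p$, i.e. when $\epsilon_p = 0$. Since $\epsilon(\CC,\CC')$ was defined above as $\sum_{p \in \CC' \cap L}\bigl(\epsilon_p(\CC) - \epsilon_p(\CC')\bigr)$ (taking $\CC_1 = \CC'$, $\CC_2 = L$, so $\CC_1 \cup \CC_2 = \CC$), the sum ranges over the intersection points on $L$, which are exactly the singularities of $\CC$ lying on $L$ that are created by adding $L$. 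The key observation is that if every singularity of $\CC$ and of $\CC'$ situated on $L$ is quasihomogeneous, then $\epsilon_p(\CC) = 0$ and $\epsilon_p(\CC') = 0$ for each such $p$, whence every summand vanishes and $\epsilon(\CC,\CC') = 0$.

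With $\epsilon(\CC,\CC') = 0$ established, I would substitute directly into the three conclusions of Theorem \ref{thm:curve_line_comb_chernA}(1). Part (a) there states $|\CC' \cap L| \leq a+1 - \epsilon(\CC,\CC')$ or $|\CC' \cap L| \geq b+1 - \epsilon(\CC,\CC')$; setting $\epsilon(\CC,\CC') = 0$ gives $|\CC' \cap L| \leq a+1$ or $|\CC' \cap L| \geq b+1$, which (since $|\CC' \cap L|$ is an integer) is exactly the statement that $|\CC' \cap L| \notin (a+1, b+1)$ as an open interval. Part (b) becomes: if $|\CC' \cap L| = a+1$ or $|\CC' \cap L| = b+1$ then $\CC$ is free with exponents $(a,b)$. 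Part (c) becomes: if $|\CC' \cap L| > b+1$ then $\CC$ is not free. These are precisely claims (1), (2), (3) of the corollary, so the proof closes.

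The only genuinely substantive point to verify carefully is the bookkeeping on which singular points enter the sum defining $\epsilon(\CC,\CC')$ and that quasihomogeneity of both $\CC$ and $\CC'$ at those points is enough to annihilate each term $\epsilon_p(\CC) - \epsilon_p(\CC')$; I do not expect any real obstacle here, since it is a direct unwinding of definitions once one notes that the sum is supported on $\CC' \cap L$. I would remark that the hypothesis is automatically satisfied for line arrangements, where all singularities are quasihomogeneous (indeed the local equations are products of linear forms, hence homogeneous), so the corollary specializes to \cite[Corollary 1.2]{A2} and \cite[Proposition 5.2]{FV}, as advertised. This is why no new geometric input beyond Theorem \ref{thm:curve_line_comb_chernA} is needed.
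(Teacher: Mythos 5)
Your proposal is correct and follows exactly the paper's own route: the paper proves this corollary by observing that the quasihomogeneity hypothesis forces $\epsilon(\CC,\CC')=0$ and then specializing Theorem \ref{thm:curve_line_comb_chernA}(1). Your additional bookkeeping (unwinding the definition of $\epsilon(\CC,\CC')$ as a sum over $\CC'\cap L$ and noting that points smooth on $\CC'$ contribute trivially) is a correct and slightly more explicit version of the same one-line argument.
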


\begin{proof}
Straightforward from Theorem \ref{thm:curve_line_comb_chernA}, since, in the quasihomogenity assumption, $\epsilon(\CC,\CC') =0$.
\end{proof}

\begin{corollary}
\label{cor:quasihomU}
Let $\CC$ be a reduced curve in $\PP^2$ such that 
$c_{\mathcal{T}_{\CC}}(t) = (at-1)(bt-1),\; a,b \in \Z_{>0},$
$ a \leq b$. Let $L$ in $\PP^2$ be a line such that $L \notin  \Irr(\CC)$.
Assume moreover  that all singularities of $\CC$  and of $\CC \cup L$ situated on $L$ are quasihomogeneous. Then:
\begin{enumerate}
\item   $|\CC \cap L|\notin (a+1 , b+1)$, where $(\cdot \; , \cdot)$ denotes an open interval. 
\item If  $|\CC \cap L| = a+1$ or $|\CC \cap L|= b+1$, then $\CC$ is free with exponents $(a,b)$.
\item  If  $|\CC \cap L| < a+1 $, then $\CC$ is not free.
\end{enumerate}
\end{corollary}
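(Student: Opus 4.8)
The plan is to deduce this corollary directly from part (2) of Theorem~\ref{thm:curve_line_comb_chernA} by showing that the quasihomogeneity hypothesis forces the correction term $\epsilon(\CC \cup L, \CC)$ to vanish. Once $\epsilon(\CC \cup L, \CC) = 0$, the three inequalities and freeness conclusions of Theorem~\ref{thm:curve_line_comb_chernA}(2) specialize verbatim to claims (1), (2), (3) of the corollary, with $k = |\CC \cap L|$.

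First I would unwind the definition of the invariant. By definition,
$$
\epsilon(\CC \cup L, \CC) = \sum_{p \in \CC \cap L} \bigl(\epsilon_p(\CC \cup L) - \epsilon_p(\CC)\bigr),
$$
where $\epsilon_p(\cdot) = \mu_p(\cdot) - \tau_p(\cdot)$ measures the defect from quasihomogeneity at $p$. So it suffices to check that every point $p \in \CC \cap L$ contributes $0$ to the sum, i.e. that both $\epsilon_p(\CC \cup L) = 0$ and $\epsilon_p(\CC) = 0$.

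The key observation is that each such $p$ lies on $L$ and is automatically a singular point of $\CC \cup L$: the branch of $L$ through $p$ meets $\CC$ there, so $\CC \cup L$ has at least two branches at $p$. Hence $p$ is one of the ``singularities of $\CC \cup L$ situated on $L$'' covered by the hypothesis, giving $\epsilon_p(\CC \cup L) = 0$. For the term $\epsilon_p(\CC)$, either $p$ is a smooth point of $\CC$, in which case $\mu_p(\CC) = \tau_p(\CC) = 0$ and $\epsilon_p(\CC) = 0$ trivially, or $p$ is a singular point of $\CC$ lying on $L$, hence quasihomogeneous by hypothesis and again $\epsilon_p(\CC) = 0$. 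In both cases the summand vanishes, so $\epsilon(\CC \cup L, \CC) = 0$.

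I do not anticipate a genuine obstacle here; the only point requiring care is the bookkeeping of which points enter the sum and the verification that the hypothesis covers all of them---in particular noting that a smooth point of $\CC$ contributes trivially, so the assumption on singularities of $\CC$ situated on $L$ is precisely what is needed, with no gap. Substituting $\epsilon(\CC \cup L, \CC) = 0$ into Theorem~\ref{thm:curve_line_comb_chernA}(2) then yields claims (1)--(3) directly, exactly as in the companion Corollary~\ref{cor:quasihom}.
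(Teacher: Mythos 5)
Your proposal is correct and follows exactly the paper's route: the paper's own proof is the one-line observation that quasihomogeneity forces $\epsilon(\CC \cup L, \CC) = 0$, after which Theorem~\ref{thm:curve_line_comb_chernA}(2) gives the three claims verbatim. Your additional bookkeeping (every $p \in \CC \cap L$ is a singular point of $\CC \cup L$ covered by the hypothesis, and smooth points of $\CC$ contribute trivially) simply fills in the detail the paper leaves implicit.
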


\begin{proof}
Straightforward from Theorem \ref{thm:curve_line_comb_chernA}, since, in the quasihomogenity assumption, $\epsilon(\CC \cup L, \CC) =0$.
\end{proof}

\begin{remark}
\label{rem:gen_line}
Recall that the Chern polynomial of a free curve with exponents $(a, b)$ factors as $(at-1)(bt-1)$. But such a  factorization of the Chern polynomial does not necessarily imply freeness. Consider for instance the projective lines arrangement from \cite[Example 4.139]{OT}.
\end{remark}

We give next an example of an arrangement of conics and lines that exhibits the same property.

\begin{example}
\label{ex:chern_factor}
Consider a curve $\CC$ defined as the union of a smooth conic $C_0$ and $7$ lines defined as follows. Take a pencil of $5$ lines with base point $P$ and $C_0$ a smooth conic that intersects this pencil transversely, so there are $10$ intersection points between the pencil and the conic. Denote by  $\mathcal{S}$ the set of the intersection points between the conic and the pencil.
Take another line that passes through $P$ and it is tangent to $C_0$ at a point $Q \in C_0$. Lastly, take a line that passes through $Q$ such that it intersects the conic $C_0$ into a point $R \in \mathcal{S}$.

 Let $\A$ be the line arrangement defined as the union of all $7$ lines described above. Notice that $|\A \cap C_0| = 11.$
 Moreover, $\A$ is a free arrangement, since it is supersolvable (\cite[Theorem 4.2]{JT}), with exponents $(1, 5)$, see for instance \cite[Example 4.11]{DIM}. 
Notice that $Q$ is quasi-homogeneous as a singular point of $\CC$  (it is a $D_6$ type singularity).
Then, by \cite[Theorem 3.7]{M},  $\CC = \A \cup C_0$ is plus-one generated with exponents $(3, 6)$ and level $6$. It follows that the  Chern numbers of  $\mathcal{T}_{\CC}$ are $c_1(\mathcal{T}_{\CC}) = -8$ and $c_2(\mathcal{T}_{\CC}) = 16$, see for instance \cite[Proposition 2.6]{MP}, so its Chern polynomial 
polynomial factors as
$$
c_{\mathcal{T}_{\CC}}(t) = (4t-1)^2.
$$
We have that  $\{|\CC' \cap L|\; | \; L \in \Irr(\CC), \; L \;line\} =  \{2,3,4,6\}$ - where $\CC' \coloneqq  \bigcup_{C \in \Irr(\CC) \setminus\{ L\} }C$, which means that for any line $L \in \Irr(\CC)$ we are in one of the cases (1)(a) or (1)(c) of Theorem \ref{thm:curve_line_comb_chernA}. In particular, Theorem \ref{thm:curve_line_comb}(i) does not hold in a hypothesis where we substitute freeness by the factorization of the Chern polynomial.

Take a line $L \notin \Irr(\CC)$ such that $P \in L$ and $L$ is transverse to $C_0$. In particular, it follows that $R \notin L$. Since  $|\CC \cap L| = 4$, this shows that Theorem \ref{thm:curve_line_comb}(ii) does not hold in a hypothesis where we substitute freeness by the factorization of the Chern polynomial.
 \end{example}

\section{On the number of singularities of $\CC$ situated on a smooth conic}
\label{sect:conic}
Let $\CC'$ be a reduced curve in $\PP^2$ such that $C_0$ is a smooth conic which is not an irreducible component of $\CC'$,  $\CC \coloneqq \CC' \cup C_0$ and 
\begin{equation}
\label{eq:k}
k \coloneqq  |\CC' \cap C_0|+ \epsilon(\CC, \CC').
\end{equation}

Let $ j : \PP^1 \stackrel{\sim}\rightarrow C_0$ be an isomorphism. Substituting in \eqref{eq:sequence_innitial} $\CC_1 = \CC'$ and $\CC_2 = C_0$ and  tensoring with $\mathcal{O}_{\PP^2}(-1)$, we get the exact sequence:

\begin{equation}
\label{eq:seq_conic}
0 \rightarrow \cT_{\CC'}(-2)  \overset{\cdot f_{C_0}}\rightarrow \cT_{\CC}  \rightarrow (i \circ j)_* \OO_{\PP^1}(-k)\rightarrow 0,
\end{equation}
which translates into: 
\begin{equation}
\label{eq:seq_gen'}
0 \rightarrow \cT_{\CC'}(-2)  \overset{\cdot f_{C_0}}\rightarrow \cT_{\CC}  \rightarrow i_* \OO_{C_0}(-\frac{k}{2})\rightarrow 0,
\end{equation}
where $\OO_{C_0}(-\frac{k}{2})$ denotes the unique line bundle on $C_0$ with degree $-k$.

Dualizing the exact sequence \eqref{eq:seq_gen'}, we get
$$0 \rightarrow \cT_{\CC}^\vee \rightarrow \cT_{\CC'}(-2)^\vee  \rightarrow 
\mathcal{E}xt^1_{\OO_{\PP^2}}(i_* \OO_{C_0}(-\frac{k}{2}), \OO_{\PP^2})\rightarrow 0,\;  \text{i.e.}$$
$$0 \rightarrow \cT_{\CC}(\deg(\CC)-1)   \rightarrow \cT_{\CC'}(-2)(3+\deg(\CC')) \rightarrow i_* \OO_{C_0}(\frac{k}{2}+2)\rightarrow 0,$$
by Remark \ref{lemma:c_*_formulas}. Consequently, keeping in mind that $\deg(\CC) = \deg(\CC')+2$, there is an exact sequence
\begin{equation}
\label{eq:conic_dual}
0 \rightarrow \cT_{\CC}   \rightarrow \cT_{\CC'} \rightarrow i_* \OO_{C_0}(\frac{k}{2}+1-\deg(\CC'))\rightarrow 0. 
\end{equation}

\smallskip

The exact sequences \eqref{eq:seq_gen'} and \eqref{eq:conic_dual} are central to the proofs of Theorems \ref{thm:curve_conic_comb_chern} and \ref{thm:curve_conic_comb_chern_add}. Moreover, we will essentially use Theorem \ref{thm:conic_restr}, which is the analogue of Lemma \ref{lemma:c2_split} for the restriction of a rank $2$ bundle to a smooth conic, and the splitting criterion relative to the restriction to a smooth conic, Theorem \ref{thm:conic_criterion}. So let us present the proofs of these two general results first.

\subsection{ Proof of Theorem \ref{thm:conic_restr}}

According to Schwarzenberger (see lemma 1.2.7 in \cite{OSS}) the bundle $\mathcal{E}$ is semi-stable or unstable because $c_1^2-4c_2=c^2\ge 0.$ This implies that the minimal integer $t$ such that $H^0(\mathcal{E}(t))\neq 0$ verifies $c+2t\le 0.$
This gives an exact sequence:
$$ 0\rightarrow \OO_{\PP^2}\longrightarrow \mathcal{E}(t)\longrightarrow  I_W (2t +c)\rightarrow 0  $$
where  $W$ is a finite subscheme
of length:
$$c_2 (\mathcal{E}(t)) = t(t +c) \ge  0.$$
If $t=0$ then $W$ is empty and this implies $\mathcal{E}=\OO_{\PP^2} \oplus \OO_{\PP^2}(c)$.\\
Let $C$ be a smooth conic ; Tensoring the above exact sequence by $\OO_C$ we get
$$ 0\rightarrow \OO_{C}\longrightarrow \mathcal{E}(t)\otimes \OO_C \longrightarrow I_W (2t +c)\otimes \OO_C\rightarrow 0.  $$
Then two different cases occur:
\begin{enumerate}
    \item if  $C\cap W=\emptyset$ then $I_W (2t +c)\otimes \OO_C=\OO_C(2t+c)$ and since $2t+c\le 0$ this implies $$\mathcal{E}(t)\otimes \OO_C=\OO_C\oplus \OO_C(2t+c).$$
    \item if $C\cap W$ is a divisor on $C$ of length $s$ then $I_W (2t +c)\otimes \OO_C=\OO_C(2t+c-\frac{s}{2})\oplus Q$ where $Q$ is a torsion sheaf on $C$ supported by the intersection scheme (i.e. $H^0(Q)=s$). This induces a surjective map 
$$ \mathcal{E}(t)\otimes \OO_C \longrightarrow \OO_C(2t+c-\frac{s}{2}) \rightarrow 0.  $$
Since $\mathcal{E}(t)\otimes \OO_C =\OO_C(a)\oplus \OO_C(b)$ with $a+b=c+2t$ and $c+2t-\frac{s}{2}<0$ this implies 
$$ \mathcal{E}(t)\otimes \OO_C =\OO_C(\frac{s}{2})\oplus \OO_C(c+2t-\frac{s}{2}).$$
\end{enumerate}

Let us verify now that $t$ is necessarily a negative number. Indeed, assume that $t>0.$ Then $H^0(\mathcal{E})=0$.
By Riemann-Roch, the Euler characteristic 
$$\chi(\mathcal{E})=  \frac{1}{2} (c_1 (\mathcal{E})^2 - 2c_2 (\mathcal{E}) + 3c_1 (\mathcal{E}) + 4)
= \frac{1}{2} (c^2 + 3c + 4)$$ is positive, hence $H^2 (\mathcal{E}) \neq 0$, so $H^0 ( \mathcal{E}(-c-3)) \neq 0$ by Serre duality, indeed 
$ \mathcal{E}^{\vee} =\mathcal{E}(-c)$.
Therefore $t > 0$ implies $t \le -c- 3$. But since $c_2 (\mathcal{E}(t)) = t(t +c) \ge  0$, the hypothesis $t > 0$ implies $t\ge -c$, a contradiction.

\smallskip

The theorem is proved for $\frac{r}{2}=-t+\frac{s}{2}.$ 

\smallskip

Let us recall that if $t=0$ then $W$ is empty (so $s=0$) and this implies $$\mathcal{E}=\OO_{\PP^2} \oplus \OO_{\PP^2}(c).$$ Then $t<0$ which implies that $r\ge 2$.
$\Box$

\subsection{Proof of Theorem \ref{thm:conic_criterion}}
The 'if' part of the statement is immediate. In fact, if $\cE$ splits as $\cE =\OO_{\PP^2}(-a) \oplus  \OO_{\PP^2}(-b)$, then $c_{\mathcal{E}}(t) = (at-1)(bt-1)$ and, moreover, 
for any smooth conic  $C_0 \subset \PP^2$, we get  $\cE|_{C_0} =\OO_{C_0}(-a) \oplus  \OO_{C_0}(-b)$.

Let us prove now the 'only if part'. Assume without loss of generality that $a \leq b$. From Remark \ref{lemma:c_*_formulas}, we know that $c_1(\cE(a)) = a-b$ and $c_2(\cE(a)) = 0$, which implies that $\cE(a)$ is 
unstable or semistable.
Then, by the proof of the previous Theorem \ref{thm:conic_restr}, we have $H^0(\cE(a))\neq 0$. We can't have $H^0(\cE(a-1))\neq 0$ because then we would have 
$$ 0\rightarrow \OO_{\PP^2}\longrightarrow \mathcal{E}(a-1)\longrightarrow  I_Z (-b-2+a)\rightarrow 0,$$ where  $Z$ is a finite subscheme of length $c_2(\mathcal{E}(a-1)) = b+1-a >  0$,
and restricting to $C_0$ we would have
$$\OO_{C_0} \rightarrow \OO_{C_0}(-1) \oplus \OO_{C_0}(-b+a-1),$$
which is impossible because $-b-1+a<0$.
Then $a$ is minimal with the property $H^0(\cE(a))\neq 0$ and we have 
$$0 \rightarrow \OO_{\PP^2} \rightarrow \cE(a)\rightarrow I_Z(a-b) \rightarrow 0.$$ But then $c_2(\cE(a))=\text{length}(Z)=0$ implies that $Z$ is empty, so we obtain 
$$0 \rightarrow \OO_{\PP^2} \rightarrow \cE(a)\rightarrow \OO_{\PP^2}(a-b) \rightarrow 0,$$
hence $\cE(a)= \OO_{\PP^2} \oplus \OO_{\PP^2}(a-b)$, i.e. $\cE= \OO_{\PP^2}(-a) \oplus \OO_{\PP^2}(-b)$.
$\Box$\\

\subsection{Proof of Theorem \ref{thm:curve_conic_comb_chern}}
Since  $c_1(\mathcal{T}_{\CC}) = -a-b$ and $c_2(\mathcal{T}_{\CC}) = ab$, then, by Remark \ref{lemma:c_*_formulas}, $c_1(\mathcal{T}_{\CC}(a)) = a-b$ and $c_2(\mathcal{T}_{\CC}(a)) =0$.
Hence we can apply Theorem \ref{thm:conic_restr} for the vector bundle 
$\mathcal{T}_{\CC}(a)$. Then there exists an integer $r \geq 0$ such that 
$$\mathcal{T}_{\CC}|_{C_0}=\mathcal{O}_{C_0}(-b-\frac{r}{2}) \oplus \mathcal{O}_{C_0}(-a+\frac{r}{2}).$$
From \eqref{eq:seq_conic} we have a surjective map 
$$ \mathcal{T}_{\CC} \rightarrow i_*\mathcal{O}_{C_0}(-\frac{k}{2})\rightarrow 0,$$
which, after tensoring by $\mathcal{O}_{C_0}$, produces again a surjection
$$\mathcal{T}_{\CC} \otimes \mathcal{O}_{C_0} =  \mathcal{T}_{\CC}|_{C_0}  \twoheadrightarrow \mathcal{O}_{C_0}(-\frac{k}{2}),$$
 i.e. a surjection 
\begin{equation}
\label{eq:surj_conic}
\mathcal{O}_{C_0}(-b-\frac{r}{2}) \oplus \mathcal{O}_{C_0}(-a+\frac{r}{2}) \twoheadrightarrow \mathcal{O}_{C_0}(-\frac{k}{2}). 
\end{equation} 
 This surjection implies that  
 \begin{equation}
 \label{eq:k_set}
 k=2b+r \; \text{ or } \; k\leq 2a-r
 \end{equation}
 hence $k \geq 2b \text{ or } k \leq 2a
 $, which implies the points (1) and (2) of the  theorem.

Let us now prove the (a) 'moreover' parts of the statement.
When $k=2a$ or $k=2b$ (resp. $k=2a-1$ or $k=2b-1$) the surjection  in
(\ref{eq:surj_conic}) implies $r=0$ (resp. $0\le r\le 1$ but $r=1$ being impossible this implies also $r=0$). By Theorem \ref{thm:conic_criterion} this proves that $\CC$ is free. 

To prove the (b) 'moreover' parts of the theorem, consider $k\le 2a-2$ (so $k<2a$ in the even case or $k<2a-1$ in the odd case). Then two possibilities occur : if $r=0$ the curve is free by Theorem \ref{thm:conic_criterion}, if $r>1$  the splitting of $\mathcal{T}_{\CC}$ is not imposed by the surjective map:
$$\mathcal{T}_{\CC}|_{C_0}\twoheadrightarrow \mathcal{O}_{C_0}(-\frac{k}{2}).$$
If $$\mathcal{T}_{\CC}|_{C_0}=\mathcal{O}_{C_0}(-b) \oplus \mathcal{O}_{C_0}(-a)$$
the curve is free; if 
$$\mathcal{T}_{\CC}|_{C_0}=\mathcal{O}_{C_0}(-b-\frac{r}{2}) \oplus 
\mathcal{O}_{C_0}(-a+\frac{r}{2})$$
with $r\ge 2$ the curve is not free.

Finally, let us prove the (c) 'moreover' parts of the theorem.
If we assume to the contrary that $\CC$ is free with exponents $(e_1, e_2), \; e_1 \leq  e_2$, then we have a factorization  $c_{\mathcal{T}_{\CC}}(t) = (e_1t-1)(e_2t-1)$ of the Chern polynomial of $\mathcal{T}_{\CC}$. Since by hypothesis $c_{\mathcal{T}_{\CC}}(t) = (at-1)(bt-1)$, this implies $e_1=a$ and $e_2=b$. 
Consider the case $k=2m$. 
Then, by Theorem \ref{thm:curve_conic_comb_intro}, $m \leq e_2 = b$, contradiction.
In the case $k=2m+1$, Theorem \ref{thm:curve_conic_comb_intro} implies $m \leq e_2-1$, i.e.  $m \leq b-1$, again a contradiction. 
In conclusion, in both cases, $\CC$ cannot be free.
$\Box$

\subsection{Proof of Theorem \ref{thm:curve_conic_comb_chern_add}}

The proof goes exactly as the one of Theorem \ref{thm:curve_conic_comb_chern}. Consider the exact sequence \eqref{eq:conic_dual}, where we substitute $\CC$ by $\CC \cup C_0$ and $\CC'$ by $\CC$:
$$0 \rightarrow \cT_{\CC \cup C_0}   \rightarrow \cT_{\CC} \rightarrow i_* \OO_{C_0}(\frac{k}{2}+1-\deg(\CC))\rightarrow 0$$
After tensoring the surjective map from the above exact sequence by $\mathcal{O}_{C_0}$, we get again a surjection
$$\mathcal{T}_{\CC} \otimes \mathcal{O}_{C_0} =  \mathcal{T}_{\CC}|_{C_0}  \twoheadrightarrow \mathcal{O}_{C_0}(\frac{k}{2}+1-\deg(\CC)),$$
 i.e., by Theorem \ref{thm:conic_restr}, a surjection 
\begin{equation}
\label{eq:surj_add}
\mathcal{O}_{C_0}(-b-\frac{r}{2}) \oplus \mathcal{O}_{C_0}(-a+\frac{r}{2}) \twoheadrightarrow \mathcal{O}_{C_0}(\frac{k}{2}+1-\deg(\CC)) 
\end{equation}
for some positive integer $r$ depending on $C_0$.
 Since $\deg(\CC) = a+b+1$, this surjection tensor by $\mathcal{O}(a+b)$ becomes
 \begin{equation}
\label{eq:surj_add2}
\mathcal{O}_{C_0}(a-\frac{r}{2}) \oplus \mathcal{O}_{C_0}(b+\frac{r}{2}) \twoheadrightarrow \mathcal{O}_{C_0}(\frac{k}{2}) 
\end{equation}

 and this  implies that  
$k=2a-r \; \text{ or } \; k\geq 2b+r$,
 hence $k \geq 2b \text{ or } k \leq 2a
 $, which proves assertions (1) and (2) of the theorem.

 \smallskip

--- Assume that $k=2m$. If $m=a$ or $m=b$ necessarily $r=0$ which implies freeness. \\ If $m<a$, say $m=a-s$ with $s>0$ then the surjection imposes 
 $$\mathcal{T}_{\CC}|_{C_0}(a+b)=\mathcal{O}_{C_0}(a-s) \oplus \mathcal{O}_{C_0}(b+s), $$ hence $\CC$ is not free.\\
 If $m>b$ say $m=b+s$ then the surjection imposes only $\frac{r}{2}\le s$ ; if $r=0$, $\CC$ is free, if $r\neq 0$ it is not.\\
 --- Assume now that $k=2m+1$. If $m=a$ the surjection onto $\mathcal{O}_{C_0}(a+\frac{1}{2})$ imposes $b=a$ and $r=0$ proving that $\CC$ is free. If $m=b$ then there is a surjection onto
$ \mathcal{O}_{C_0}(b+\frac{1}{2})$. This imposes $r\le 1$ but since $r=1$ is impossible one gets $r=0$ that is $\CC$ is free.\\
If $m<a$ say $m=a-s$ with $s>0$ then the surjection onto $\mathcal{O}_{C_0}(a-s+\frac{1}{2})$ imposes $r=2s-1>0$ i.e. $\CC$ is not free. \\
If $m>b$ say $m=b+s$ with $s>0$ then the surjection onto $\mathcal{O}_{C_0}(b+s+\frac{1}{2})$ does not impose any condition on $r$; if $r=0$ the curve is free, if $r\neq 0$ it is not.
 $\Box$

 \smallskip
 
The next two examples illustrate  the situation in Theorem \ref{thm:curve_conic_comb_chern} when 
$m<a$ and $\CC$ is not free (Example \ref{example-notfree}) and when 
$m<a$ and $\CC$ is free (Example \ref{example-free}).

\begin{example}
\label{example-notfree}
    Let us consider the curve $\CC$ formed by the union of two smooth conics $C_1$ and $C_2$ meeting in four points $a,b,c,d$, the reducible conic $(ac)\cup (bd)$ belonging to the pencil generated by $C_1$ and $C_2$ and the line $(ab)$. Since $(ab)$ is an irreducible component of the reducible conic $(ab)\cup (cd)$ which belongs also to the same pencil one has an exact sequence (see Theorem 2.8 in \cite{VaE})
$$0 \rightarrow \OO_{\PP^2} (-2)\rightarrow \cT_{\CC}   \rightarrow \mathcal{I}_Z(-4) \rightarrow 0.$$
The non zero section of $\mathrm{H}^0(\cT_{\CC} (2))$ is the ``canonical section'' (defined in \cite{VaE}) associated to the pencil and $Z$ consists in the single point $(ad)\cap (bc).$ Then $c_1(\cT_{\CC}(2))=-2$ and $c_2(\cT_{\CC} (2))=1$, so $\cT_{\CC} $ has the same Chern classes than $\OO_{\PP^2} (-3)^2$ but $\cT_{\CC} \neq \OO_{\PP^2} (-3)^2.$

Removing one smooth conic $C_1$ we get a new curve $\CC'$ and an exact sequence 
$$0 \rightarrow \OO_{\PP^2} (-2)\rightarrow \cT_{\CC'}   \rightarrow \mathcal{I}_Z(-2) \rightarrow 0$$
where $Z$ is the same than before.
This gives without any difficulty
$$0 \rightarrow \cT_{\CC} \rightarrow \cT_{\CC'}   \rightarrow \OO_{C_1}(-2) \rightarrow 0.$$
With the notations of Theorem \ref{thm:curve_conic_comb_chern} we have $k=4$, so $m=2$, and $a=b=3$; this gives an example where $m<a$ and $\CC$ is not free.
\end{example}

\begin{example}
\label{example-free}
Consider the curve from \cite[Example 4.4]{M},  $$\CC:  (x^2 + 2xy + y^2 + xz)(x^2 + xz + yz)(x^2 + xy + z^2)(x+y-z)y(x+z)
(2x+y)(x^2 - y^2 + xz + 2yz)\cdot $$
$$
(x^2 + 2xy - xz + yz)=0.$$


The curve $\CC$ is free with exponents $(a,b) = (6,7)$. Take the smooth conic $C_0 \in \Irr(\CC)$, 
$$C_0: x^2 - y^2 + xz + 2yz = 0.$$ Denote $\CC'\coloneqq \CC \setminus \{C_0\}$. Then $|\CC' \cap C_0|=6$. 
As singularities of $\CC$, these six points are  ordinary singularities of multiplicity 5, 6, 7 and three ordinary singularities of multiplicity 4.
Only two of these singularities are not quasihomogeneous, $P = [0:0:1]$, an ordinary singularity with $6$ branches, respectively $Q = [1:-2:-1]$, an ordinary singularity with $7$ branches and one computes $\epsilon(\CC, \CC')_{P}=1, \;  \epsilon(\CC, \CC')_{Q} =1$. 
Hence $\epsilon(\CC, \CC')=2.$
Then, in the notations of Theorem \ref{thm:curve_conic_comb_chern}, $k = 2m = 8 $, $m=a-2 < a$.
\end{example}

\subsection{The addition-deletion of a smooth conic  to a curve, revisited}
\label{subsect:add_dell}

\smallskip

 In \cite{M} the result of  addition-deletion of a smooth conic to a free curve is described using the exact sequence \eqref{eq:seq_conic}. We suggest here an alternative approach, via the exact sequence \eqref{eq:seq_gen2} below.
 \\

 Let $\CC$ be a reduced curve in $\PP^2$ and $C_0$ a smooth conic  which is not a component of $\CC$.
We get the exact sequence:

\begin{equation}
\label{eq:seq_gen2}
0 \rightarrow \cT_{\CC\cup C_0}  \rightarrow \cT_{\CC}  \rightarrow \OO_{C_0}(- \frac{k_0}{2} )\rightarrow 0.
\end{equation}
From this exact sequence we can deduce a relation between invariants of the curves $\CC$ and $\CC\cup C_0$ and the integer $k_0$. Indeed, the Chern polynomial $c_{\cT_{\CC}}(t)$ being the product (modulo $t^3$) of the Chern polynomials  $c_{\cT_{\CC\cup C_0}}(t)$ and $c_{\OO_{C_0}(- \frac{k_0}{2})}(t)$
we obtain the following relation:
\begin{lemma} \label{k_0 and k_1} 
 $$k_0=-2+\tau (\CC\cup C_0, \CC)-2 \times \mathrm{deg}(\CC),$$ 
 where $\tau (\CC\cup C_0, \CC):=\sum_{p\in \CC\cap C_0} [\tau_p(\CC\cup C_0)-\tau_p(\CC )]$
\end{lemma}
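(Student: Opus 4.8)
The plan is to extract the relation directly from the multiplicativity of the total Chern class in the short exact sequence \eqref{eq:seq_gen2}, exactly as announced before the statement. Since
\[
0 \rightarrow \cT_{\CC\cup C_0}\rightarrow \cT_{\CC}\rightarrow i_*\OO_{C_0}(-\tfrac{k_0}{2})\rightarrow 0
\]
is exact, one has $c_{\cT_{\CC}}(t)=c_{\cT_{\CC\cup C_0}}(t)\cdot c_{i_*\OO_{C_0}(-k_0/2)}(t)$ modulo $t^3$. So the whole argument reduces to writing down the three Chern polynomials and matching the coefficient of $t^2$.

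First I would record the two standard Chern class formulas for the logarithmic bundle of a reduced plane curve $D$ of degree $d$: $c_1(\cT_D)=1-d$ (equivalently $\cT_D^\vee=\cT_D(d-1)$, already used in \S\ref{sec:preliminaries}) and $c_2(\cT_D)=(d-1)^2-\tau(D)$, where $\tau(D)=\sum_p\tau_p(D)$ is the global Tjurina number. Applying these with $D=\CC$ of degree $d:=\deg(\CC)$, and with $D=\CC\cup C_0$ of degree $d+2$ since $\deg(C_0)=2$, gives $c_1(\cT_{\CC})=1-d$, $c_2(\cT_{\CC})=(d-1)^2-\tau(\CC)$, and $c_1(\cT_{\CC\cup C_0})=-d-1$, $c_2(\cT_{\CC\cup C_0})=(d+1)^2-\tau(\CC\cup C_0)$.

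Next I would compute the Chern polynomial of the quotient $Q:=i_*\OO_{C_0}(-\tfrac{k_0}{2})$, a torsion sheaf supported on the smooth conic $C_0$ and carrying a line bundle of degree $-k_0$. Choosing a line bundle $M$ on $\PP^2$ that restricts to this bundle on $C_0$ and using the structure sequence $0\rightarrow M(-2)\rightarrow M\rightarrow Q\rightarrow 0$ coming from $\deg(C_0)=2$, multiplicativity yields $c_Q(t)=c_M(t)/c_{M(-2)}(t)=1+2t+(4+k_0)t^2\bmod t^3$, so that $c_1(Q)=2=\deg(C_0)$ and $c_2(Q)=4+k_0$. I expect this pushforward computation to be the only delicate bookkeeping step: one must correctly track how $c_2(Q)$ depends on the half-integer twist $-\tfrac{k_0}{2}$, i.e. on the degree $-k_0$ of the line bundle. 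The twist $M$ is available on $\PP^2$ only when $-k_0$ is even, but the Chern class of $i_*L$ depends only on $\deg(C_0)$ and $\deg(L)$ (a Grothendieck--Riemann--Roch fact), so the value $c_2(Q)=4-\deg(L)=4+k_0$ obtained in the even case extends to all degrees by linearity.

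Finally, comparing coefficients in $c_{\cT_{\CC}}(t)=c_{\cT_{\CC\cup C_0}}(t)\,c_Q(t)\bmod t^3$: the coefficient of $t$ reads $1-d=(-d-1)+2$, an automatic consistency check on the degrees, while the coefficient of $t^2$ gives
\[
(d-1)^2-\tau(\CC)=(d+1)^2-\tau(\CC\cup C_0)+2(-d-1)+(4+k_0).
\]
Solving for $k_0$ and simplifying (using $(d+1)^2-(d-1)^2=4d$) gives $k_0=-2-2d+[\tau(\CC\cup C_0)-\tau(\CC)]$. It then remains to identify the global Tjurina difference with the local sum in the statement: since $C_0$ is smooth, $\mathrm{Sing}(\CC\cup C_0)=\mathrm{Sing}(\CC)\cup(\CC\cap C_0)$, and at any point $p\notin C_0$ one has $\tau_p(\CC\cup C_0)=\tau_p(\CC)$; hence only the points of $\CC\cap C_0$ contribute and $\tau(\CC\cup C_0)-\tau(\CC)=\sum_{p\in\CC\cap C_0}[\tau_p(\CC\cup C_0)-\tau_p(\CC)]=\tau(\CC\cup C_0,\CC)$. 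This yields exactly $k_0=-2+\tau(\CC\cup C_0,\CC)-2\deg(\CC)$. As an independent cross-check, matching the degree of the cokernel in \eqref{eq:seq_gen2} with that of the cokernel in \eqref{eq:conic_dual} recovers the same value of $k_0$, via the classical Milnor addition formula $\mu_p(\CC\cup C_0)-\mu_p(\CC)=2(\CC\cdot C_0)_p-1$ combined with B\'ezout's theorem.
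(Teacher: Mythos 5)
Your proof is correct and follows the same route as the paper: applying multiplicativity of the Chern polynomial to the exact sequence \eqref{eq:seq_gen2}, using $c_{\cT_D}(t)=1+(1-\deg D)t+((\deg D-1)^2-\tau(D))t^2$ and $c_{i_*\OO_{C_0}(-k_0/2)}(t)=1+2t+(k_0+4)t^2$, then matching the $t^2$ coefficients. The paper compresses all of this into ``a direct computation''; your write-up merely supplies the details it omits (the pushforward Chern class via the structure sequence, the parity/GRR justification, and the identification of the global Tjurina difference with the local sum over $\CC\cap C_0$), all of which are sound.
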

\begin{proof}
It is a direct computation. Indeed, the Chern polynomial 
of $\OO_{C_0}(- \frac{k_0}{2} )$ is 
$$ 1+2t+(k_0+4)t^2$$ and if $D$ is a reduced divisor the Chern polynomial of $\cT_D$ is
$$1+(1-\deg (D))\,t +( (\mathrm{deg}(D)-1)^2-\tau(D))\,t^2$$ where $\tau(D)$ is the total Tjurina number of $D$.
\end{proof}

Dualizing the exact sequence \eqref{eq:seq_gen2} we find again (after a shift)
\begin{equation}
\label{eq:seq_gen3'}
0 \rightarrow \cT_{\CC}(-2)  \rightarrow \cT_{\CC\cup C_0}  \rightarrow \OO_{C_0}(\frac{-2\deg(\CC)+2+k_0}{2} )\rightarrow 0. 
\end{equation}
We had seen before, see \eqref{eq:seq_gen'}, that 
$$ 0 \rightarrow \cT_{\CC}(-2)  \rightarrow \cT_{\CC\cup C_0}  \rightarrow \OO_{C_0}(-\frac{k}{2} )\rightarrow 0 $$
where $k :=|\CC \cap C_0|+\epsilon(\CC \cup C_0, \CC)$. 
This gives the identity 
\begin{equation}
\label{eq:k_sum}
k+k_0 = 2(\deg(\CC)-1),
\end{equation}
i.e.
$$ k_0+2=2\deg(\CC)-|\CC \cap C_0|-\epsilon(\CC \cup C_0, \CC).$$
According to the previous lemma this gives also
$$\epsilon(\CC \cup C_0, \CC)= - \tau (\CC\cup C_0, \CC)+ 4\deg(\CC)-|\CC \cap C_0|. $$

\begin{prop}
\label{prop:mu_formula}
Let $\CC$ be a curve and $C_0$ a smooth conic such that $C_0 \notin \Irr(\CC)$. Then
\begin{equation}
\label{eq:new_mu_P_id}
    \sum_{P \in \CC \cap C_0}[\mu_P(\CC \cup C_0) - \mu_P(\CC)+1] = 4 \deg(\CC).
    \end{equation}
\end{prop}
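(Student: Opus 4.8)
The plan is to derive \eqref{eq:new_mu_P_id} purely formally by combining the identities already assembled immediately above the statement; all the geometric input (the exact sequences \eqref{eq:seq_gen2} and \eqref{eq:seq_gen'}, and the Chern-polynomial bookkeeping of Lemma \ref{k_0 and k_1}) has by that point already been extracted. The crucial starting point is the last displayed identity preceding the Proposition,
$$\epsilon(\CC \cup C_0, \CC)= - \tau (\CC\cup C_0, \CC)+ 4\deg(\CC)-|\CC \cap C_0|,$$
which was obtained by eliminating $k$ and $k_0$ between Lemma \ref{k_0 and k_1} and \eqref{eq:k_sum}.

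First I would expand the left-hand side using the definitions from \S\ref{sec:preliminaries}. By definition $\epsilon(\CC\cup C_0,\CC)=\sum_{P\in\CC\cap C_0}\epsilon(\CC\cup C_0,\CC)_P$, and each local term unwinds as $\epsilon(\CC\cup C_0,\CC)_P=\epsilon_P(\CC\cup C_0)-\epsilon_P(\CC)=[\mu_P(\CC\cup C_0)-\tau_P(\CC\cup C_0)]-[\mu_P(\CC)-\tau_P(\CC)]$. Grouping the Milnor and Tjurina contributions separately yields
$$\epsilon(\CC\cup C_0,\CC)=\sum_{P\in\CC\cap C_0}[\mu_P(\CC\cup C_0)-\mu_P(\CC)]-\tau(\CC\cup C_0,\CC),$$
where the second sum is exactly the quantity $\tau(\CC\cup C_0,\CC)=\sum_{P}[\tau_P(\CC\cup C_0)-\tau_P(\CC)]$ introduced in Lemma \ref{k_0 and k_1}.

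Next I would equate the two expressions for $\epsilon(\CC\cup C_0,\CC)$. The term $-\tau(\CC\cup C_0,\CC)$ occurs on both sides and cancels, leaving
$$\sum_{P\in\CC\cap C_0}[\mu_P(\CC\cup C_0)-\mu_P(\CC)]=4\deg(\CC)-|\CC\cap C_0|.$$
Finally, since $|\CC\cap C_0|$ denotes the number of points of the reduced intersection scheme, i.e. $|\CC\cap C_0|=\sum_{P\in\CC\cap C_0}1$, I would transfer it to the left-hand side and absorb it into the summand, obtaining $\sum_{P\in\CC\cap C_0}[\mu_P(\CC\cup C_0)-\mu_P(\CC)+1]=4\deg(\CC)$, which is the desired formula.

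I do not expect a serious obstacle: the statement is a formal consequence of the Chern-polynomial computation and the definition of $\epsilon$. The only substantive point is the cancellation of the Tjurina defect $\tau(\CC\cup C_0,\CC)$, which is precisely the phenomenon that makes the final identity depend on Milnor numbers alone; and the one place requiring care is the interpretation of $|\CC\cap C_0|$ as the cardinality of the reduced intersection (the integer $r$ of \eqref{eq:sequence_innitial}), so that the constant $1$ contributed by each intersection point is correctly accounted for.
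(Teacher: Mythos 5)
Your proof is correct and essentially the same as the paper's: both amount to substituting the formulas for $k$ and $k_0$ (from \eqref{eq:k} and Lemma \ref{k_0 and k_1}) into the identity \eqref{eq:k_sum} and using $\epsilon_P = \mu_P - \tau_P$ at each intersection point, so that the Tjurina contributions drop out and only Milnor numbers survive. The sole cosmetic difference is that you begin from the already-displayed consequence $\epsilon(\CC\cup C_0,\CC)= -\tau(\CC\cup C_0,\CC)+4\deg(\CC)-|\CC\cap C_0|$ and cancel $\tau(\CC\cup C_0,\CC)$, whereas the paper performs the same expansion directly inside $k+k_0=2(\deg(\CC)-1)$, absorbing the $\tau$-differences into the $\mu$-differences.
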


\begin{proof}
Just replace in  \eqref{eq:k_sum} the parameters $k, k_0$ by their formulas recalled above.
Indeed 
$$k+k_0=|\mathcal{C}\cap C_0| +\epsilon(\mathcal{C} \cup C_0, \CC)-2+\tau(\mathcal{C} \cup C_0, \mathcal{C})-2\mathrm{deg}(\mathcal{C})$$
$$= |\mathcal{C}\cap C_0| +\sum_{P\in \mathcal{C}\cap C_0} [\epsilon_p(\mathcal{C} \cup C_0)-\epsilon_p(\mathcal{C})]-2+
\sum_{p\in \CC\cap C_0} [\tau_p(\CC\cup C_0)-\tau_p(\CC )]
-2\mathrm{deg}(\mathcal{C})$$
$$= |\mathcal{C}\cap C_0| +\sum_{P\in \mathcal{C}\cap C_0} [\mu_P(\CC \cup C_0) - \mu_P(\CC)]-2-2\mathrm{deg}(\mathcal{C}).$$   
\end{proof}

\begin{corollary}
\label{cor:delta_inv}
Let $\CC$ be a curve, $C_0$ a smooth conic such that $C_0 \notin \Irr(\CC)$ and let $\delta$ be the delta-invariant. Then
$$
  \sum_{P \in \CC \cap C_0}[\delta_P(\CC \cup C_0) - \delta_P(\CC)] = 2 \deg(\CC).
$$
\end{corollary}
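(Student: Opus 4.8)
The plan is to deduce this corollary directly from Proposition \ref{prop:mu_formula} by translating Milnor numbers into delta-invariants via the classical Milnor formula for a plane curve singularity,
$$
\mu_P(D) = 2\delta_P(D) - r_P(D) + 1,
$$
where $r_P(D)$ denotes the number of local analytic branches of the curve $D$ at the point $P$. First I would apply this formula to both $\CC \cup C_0$ and $\CC$ at each point $P \in \CC \cap C_0$, take the difference, and obtain
$$
\mu_P(\CC \cup C_0) - \mu_P(\CC) = 2\,[\delta_P(\CC \cup C_0) - \delta_P(\CC)] - [r_P(\CC \cup C_0) - r_P(\CC)].
$$

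The key step is then to compute the change in the number of branches $r_P(\CC \cup C_0) - r_P(\CC)$. Here I would use crucially that $C_0$ is a \emph{smooth} conic: at every point $P \in \CC \cap C_0$, the conic $C_0$ contributes exactly one local branch, and this branch is distinct from those of $\CC$ since $C_0 \notin \Irr(\CC)$. Hence the local branches of $\CC \cup C_0$ at $P$ are precisely those of $\CC$ together with the single branch of $C_0$, giving $r_P(\CC \cup C_0) - r_P(\CC) = 1$ for every such $P$. Note this identity holds uniformly, whether $P$ is a singular or a smooth point of $\CC$ (in the smooth case $r_P(\CC) = 1$, $\delta_P(\CC) = 0$, $\mu_P(\CC) = 0$), so no special case analysis is needed.

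Substituting $r_P(\CC \cup C_0) - r_P(\CC) = 1$ into the difference formula yields, for each $P$,
$$
\mu_P(\CC \cup C_0) - \mu_P(\CC) + 1 = 2\,[\delta_P(\CC \cup C_0) - \delta_P(\CC)].
$$
Summing over all $P \in \CC \cap C_0$ and invoking the identity of Proposition \ref{prop:mu_formula}, the left-hand side equals $4\deg(\CC)$, so that $2\sum_{P}[\delta_P(\CC \cup C_0) - \delta_P(\CC)] = 4\deg(\CC)$, and dividing by $2$ gives the claimed formula.

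I do not expect a genuine obstacle in this argument, as it is a formal consequence of an already-established proposition. The only point requiring care is the branch-counting justification: one must confirm that smoothness of $C_0$ forces exactly one new branch at each intersection point, which is where the hypothesis $C_0 \notin \Irr(\CC)$ together with the smoothness of the conic is used. The remainder is a routine substitution of the Milnor formula and an application of Proposition \ref{prop:mu_formula}.
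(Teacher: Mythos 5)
Your proof is correct and takes essentially the same route as the paper: the paper likewise deduces the corollary from Proposition~\ref{prop:mu_formula} via the Milnor formula $\mu = 2\delta - r + 1$, dismissing the rest as a ``direct computation.'' Your write-up simply makes explicit the branch-counting step $r_P(\CC \cup C_0) - r_P(\CC) = 1$ (valid since the smooth conic $C_0$ adds exactly one branch, distinct from those of $\CC$ because the curves share no component), which the paper leaves implicit.
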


\begin{proof}
Recall the Milnor formula $\mu = 2 \delta - r +1$, where $r$ is the number of branches at a plane curve singularity $P$ and $\delta, \mu$ are the delta invariant, respectively the Milnor number, at $P$. Then the conclusion follows from \eqref{eq:new_mu_P_id} by a direct computation.
\end{proof}

\bigskip

\end{document}